\def\qed{\hfill $\Box$}
\newcommand{\vin}{\rotatebox{90}{$\in$}}
\newtheorem{defi}{Definition}
\newtheorem{thm}[defi]{Theorem}
\newtheorem{prop}[defi]{Proposition}
\newtheorem{lem}[defi]{Lemma}
\newtheorem{rem}[defi]{Remark}
\newtheorem{cor}[defi]{Corollary}
\begin{document}

\title[Totally umbilical submanifolds in space forms]
{Totally umbilical submanifolds in pseudo-Riemannian space forms}

\author[Y. Sato]{Yuichiro Sato}

\address{Department of Mathematical Sciences,
Tokyo Metropolitan University,
Minami-Osawa 1-1, Hachioji, Tokyo, 192-0397, Japan.}

\email{y-sato@tmu.ac.jp}

\subjclass[2010]{Primary 53A35; Secondary 53B25}

\keywords{totally umbilical submanifold, pseudo-Riemannian geometry, lightlike geometry}

\begin{abstract}
A totally umbilical submanifold in pseudo-Riemannian manifolds is a fundamental notion, which is characterized by the condition that the second fundamental form is proportional to the metric. 
It is also a generalization of the notion of a totally geodesic submanifold. 
In this paper, we classify congruent classes of full totally umbilical submanifolds in non-flat pseudo-Riemannian space forms, and consider their moduli spaces. 
As a consequence, we show that some moduli spaces of isometric immersions between space forms which one of the same constant curvature are non-Hausdorff. 
\end{abstract}

\maketitle 

\section{Introduction} 
A totally umbilical submanifold in a (pseudo-)Riemannian manifold is a fundamental notion. 
For example, a complete non-totally geodesic, totally umbilical submanifolds in a Euclidean space is a round sphere. 
The definition of a totally umbilical submanifold is that the second fundamental form is  proportional to the metric on the submanifold. 
In the case of Riemannian geometry, there exist researches of totally umbilical submanifolds in various ambient spaces \cite{C0, CV, MT, OT, ST}. 

There exist researches which characterizes the totally umbilicity of submanifolds in pseudo-Riemannian space forms \cite{Aku, Ch, ChN, DaSiDaSi, YL}. 
Ahn, Kim and Kim \cite{AKK} gave a complete classification of totally umbilical submanifolds in pseudo-Euclidean spaces, which are flat space forms. 
For non-flat cases, there exist some recent researches \cite{C} by Chen. 
In \cite[Propositions~3.7, 3.8, Chapter~3]{C}, the following is mentioned: 
If $\phi : M^{m}_{s} \rightarrow \mathbb{S}^{n}_{p}(1)$ is a totally umbilical isometric immersion,  then it is congruent to an open portion of one of the following submanifolds: 
\begin{align*} 
&\bullet \mathbb{S}^{m}_{s}(r^2) \rightarrow \mathbb{S}^{m+1}_{s}(1) \ ; \ x \mapsto (x, \sqrt{1-r^2}) \ (0<r\leq1), \\
&\bullet \mathbb{S}^{m}_{s}(r^2) \rightarrow \mathbb{S}^{m+1}_{s+1}(1) \ ; \ x \mapsto (\sqrt{r^2-1}, x) \ (r\geq1), \\
&\bullet \mathbb{H}^{m}_{s}(-r^2) \rightarrow \mathbb{S}^{m+1}_{s}(1) \ ; \ x \mapsto (x, \sqrt{1+r^2}) \ (r>0), \\
&\bullet \mathbb{E}^{m}_{s} \rightarrow \mathbb{S}^{m+2}_{s+1}(1) \ ; \ x \mapsto \left (r\langle x, x \rangle_{s}+rb-\dfrac{r}{4},rx ,\sqrt{1+br^{2}} ,r\langle x, x \rangle_{s}-rb+\dfrac{r}{4}\right) \ (r>0, br^2 \geq -1), \\
&\bullet \mathbb{E}^{m}_{s} \rightarrow \mathbb{S}^{m+2}_{s+2}(1) \ ; \ x \mapsto \left(r\langle x, x \rangle_{s}+rb-\dfrac{r}{4},\sqrt{br^{2}-1} ,rx, r\langle x, x \rangle_{s}+rb+\dfrac{r}{4}\right) \ (r>0, br^2 \geq 1). 
\end{align*} 

This classification of totally umbilical submanifolds in $\mathbb{S}^{n}_{p}(1)$ is insufficient. 
In fact, the following example is not contained in the above list 
\begin{equation} 
\psi : \mathbb{S}^{m}_{s}(1) \rightarrow \mathbb{S}^{m+2}_{s+1}(1) \ ; \ x \mapsto (1,x,1). \label{missing} 
\end{equation} 
When we compute the mean curvature vector field $H$ of $\psi$, we get 
\begin{equation*} 
H = (1,0,\cdots,0,1) \in \mathbb{E}^{m+3}_{s+1}. 
\end{equation*}
Thus, $H$ is a non-zero lightlike vector field. Hence $\psi$ is non-totally geodesic. 
Moreover, there are some observations for this example (see Section~4). 
It is obvious that the co-dimension is two and the co-index is one. 
The terminology is explained in detail in Section 2. 

Dajczer and Fornari in \cite{DF} showed that let
$\phi : \mathbb{S}^{m}_{s}(1) \rightarrow \mathbb{S}^{m+n}_{s}(1)$ be an isometric immersion, then 
$\phi$ is totally geodesic, where $m \geq 2$ and $1 \leq n \leq m-s-1$. 
In addition, Dajczer and Rodriguez in \cite{DR} showed the following rigidity theorem: 
Let $\phi : \mathbb{S}^{m}_{s}(1) \rightarrow \mathbb{S}^{m+2}_{s+1}(1)$ be an isometric immersion with $m-s \geq 4$. If the set of totally geodesic points does not disconnect $\mathbb{S}^{m}_{s}(1)$, then $\phi$ is congruent to an isometric immersion of the following type; 
\begin{equation}
\mathbb{S}^{m}_{s}(1) \ni x \mapsto (f(x), x, f(x)) \in \mathbb{S}^{m+2}_{s+1}(1), \label{rigid}
\end{equation}
where $f : \mathbb{S}^{m}_{s}(1) \rightarrow \mathbb{R}$ is a smooth function. 
For any $a \in \mathbb{R} \setminus \{0\}$, the following isometric immersions 
\begin{equation*} 
\psi_{a} : \mathbb{S}^{m}_{s}(1) \rightarrow \mathbb{S}^{m+2}_{s+1}(1) \ ; \ x \mapsto (a,x,a) \end{equation*} 
are congruent to the above $\psi = \psi_{1}$. 
Namely, co-dimension two totally umbilical immersions $\psi_{a}$ are in a special case that $f$ is a  non-zero constant function for the mapping (\ref{rigid}). 
Moreover, the set of totally geodesic points is empty since $\psi_{a}$ is not totally geodesic but totally umbilical. 

In this paper, we give a complete classification of totally umbilical submanifolds in pseudo-spheres or pseudo-hyperbolic spaces, that is, we classify congruent classes of full totally umbilical submanifolds. 
Moreover, we consider moduli spaces of totally umbilical submanifolds. 
As a consequence, we show that some moduli spaces of isometric immersions between space forms which are of the same constant curvature are non-Hausdorff. 

As applications, we obtain some totally umbilical lightlike submanifolds in non-flat pseudo-Riemannian space forms. 
A lightlike submanifolds in pseudo-Riemannian manifolds is defined by some kind of degeneracy of the induced metric. 
See references \cite{BD} and \cite{DJ} in detail. 
A classification of totally umbilical lightlike submanifolds in pseudo-Riemannian space forms is an open problem. 

At the end of this paper, we devote in Section 4.4 to the study of parallel isometric immersions. 
As a consequence, we see that the existence of marginally trapped parallel isometric immersions from an indefinite symmetric $R$-space into a pseudo-sphere or a pseudo-hyperbolic space. 
An isometric immersion between pseudo-Riemannian manifolds is marginally trapped if the mean curvature vector field is not zero at arbitrary point, but its norm vanishes identically. 
This never occurs in the Riemannian geometry. 

\section{Totally umbilical submanifolds in pseudo-Riemannian space forms}
Let $m$ and $n$ be positive integers, 
$M^{m}$ an $m$-dimensional manifold and 
$\bar{M}^{n}$ an $n$-dimensional pseudo-Riemannian manifold with index $p$. 
Here, when $L$ is a manifold, $L^{m}_{s}$ denotes an $m$-dimensional pseudo-Riemannian manifold with index $s$. 
The notation $\cong$ means the existence of an isometric isomorphism between pseudo-Riemannian manifolds. 
Hereinafter, we assume that a $C^{\infty}$-mapping $\phi : M \rightarrow \bar{M}$ is an immersion. 
Then, we call $\phi(M)$ an immersed submanifold in $\bar{M}$. 
In particular, when $\phi$ is injective, and $M$ is homeomorphic to the image 
$\phi(M)$ as a subspace of $\bar{M}$, $\phi(M)$ is said to be an \textit{embedded submanifold} in $\bar{M}$. 
In pseudo-Riemannian geometry, we remark that the induced metric is not always non-degenerate on $M$ even if $\phi$ is an immersion. 
When the induced metric is non-degenerate, we call $\phi(M)$ a 
\textit{non-degenerate submanifold}, or a \textit{pseudo-Riemannian submanifold} 
in $\bar{M}^{n}_{p}$. 

As another situation, let $M^{m}_{s}, \bar{M}^{n}_{p}$ be pseudo-Riemannian manifolds, and $g, \bar{g}$ denote pseudo-Riemannian metrics of $M, \bar{M}$, respectively. 
When $\phi$ is an isometric immersion from $M^{m}_{s}$ into $\bar{M}^{n}_{p}$, i.e. 
$\phi^{\ast}\bar{g} = g$, we also call $\phi(M)$ a non-degenerate submanifold in 
$\bar{M}^{n}_{p}$. 

We call $\phi$ an \textit{$r$-lightlike immersion} when the induced metric $g$ by $\phi$ is degenerate, and its radical distribution has constant rank $r > 0$, 
where a radical distribution of $M$ is defined by 
\begin{equation*} 
\textrm{Rad}TM = \{ X \in TM \mid g(X, Y) = 0, \ \textrm{for all} \ Y \in TM \}. 
\end{equation*} 
When $\phi$ is an $r$-lightlike immersion, we call $\phi(M)$ an $r$-lightlike submanifold in $\bar{M}^{n}_{p}$. 
Then, there exists a vector bundle $\textrm{tr}(TM)$ over $M$ such that we obtain a direct sum decomposition of vector bundles
\begin{equation} 
\phi^{\ast}T\bar{M} = TM \oplus \textrm{tr}(TM), \label{Gauss-type}
\end{equation}
where $\textrm{tr}(TM)$ is called a transversally vector bundle of $M$. 
Note that the above $\oplus$ does not express an orthogonal direct sum decomposition, and the uniqueness of a transversally vector bundle does not hold. 
This decomposition (\ref{Gauss-type}) induces a Gauss-type formula
\begin{equation*} 
\bar{\nabla}_{X}Y = \check{\nabla}_{X}Y + B(X,Y) 
\end{equation*}
for any $X, Y \in \Gamma(TM)$, and $\bar{\nabla}$ is a Levi--Civita connection of $\bar{M}$. 
We call $\check{\nabla}$ and $B$ an induced connection and a second fundamental form with respect to $\textrm{tr}(TM)$, respectively. 

We define an $n$-dimensional \textit{singular pseudo-Euclidean space} with the signature $(p, q, r)$ as 
\begin{equation*} 
\mathbb{E}^{p,q,r} := \left (\mathbb{R}^{n}, (\cdot , \cdot) = - \sum_{i=1}^{p} dx_{i}^{2} + \sum_{j=p+1}^{p+q} dx_{j}^{2} + \sum_{k=p+q+1}^{n} 0 dx_{k}^{2} \right ), 
\end{equation*} 
where $n=p+q+r$ and $(x_{1}, \cdots, x_{n})$ expresses the canonical coordinates on $\mathbb{R}^{n}$ \cite{St}. We use the following notations:
\begin{itemize} 
\item When $r=0$, $\mathbb{E}^{p,q,0}$ is called a \textit{pseudo-Euclidean space} and we denote  it by $\mathbb{E}^{n}_{p}$ and the metric by $\langle \cdot, \cdot \rangle_{p}$. 
\item When $r=0, p=1$, $\mathbb{E}^{1, n-1, 0} = \mathbb{E}^{n}_{1}$ is called a \textit{Minkowski $n$-space} and we denote it by $\mathbb{L}^{n}$.
\item When $p=r=0$, $\mathbb{E}^{0, n, 0} = \mathbb{E}^{n}_{0}$ is nothing but a Euclidean $n$-space $\mathbb{E}^{n}$. 
\end{itemize} 

We remark that $r \geq 1$ if and only if the metric $(\cdot , \cdot )$ is degenerate. 
In the context of isotropic geometry, the notation $\mathbb{E}^{0, n-1, 1}$ would be denoted by $\mathbb{I}^{n}$ \cite{Sa}. 

We define non-flat pseudo-Riemannian space forms with index $p$ as 
\begin{equation*} 
\mathbb{S}^{n}_{p}(r^2):=\left\{ x \in \mathbb{E}^{n+1}_{p} \mid \langle x, x \rangle_{p}=r^{2}\right\}, \quad 
\mathbb{H}^{n}_{p}(-r^2):=\left\{ x \in \mathbb{E}^{n+1}_{p+1} \mid \langle x, x \rangle_{p+1}=-r^{2}\right\},
\end{equation*} 
where $r>0$. 
We call $\mathbb{S}^{n}_{p}(r^2)$ and $\mathbb{H}^{n}_{p}(-r^2)$ an $n$-dimensional \textit{pseudo-sphere} and \textit{pseudo-hyperbolic space}, respectively. 
When $p=0$, $\mathbb{S}^{n}_{0}(1)$ and $\mathbb{H}^{n}_{0}(-1)\cap\{x_{1}>0\}$ are simply a standard sphere $\mathbb{S}^{n}(1)$ and a hyperbolic space $\mathbb{H}^{n}(-1)$, respectively. When $p=1$, $\mathbb{S}^{n}_{1}(1)$ and $\mathbb{H}^{n}_{1}(-1)$ are called a \textit{de~Sitter $n$-spacetime} and an \textit{anti-de~Sitter $n$-spacetime}, denoted by $d\mathbb{S}^{n}(1), Ad\mathbb{S}^{n}(-1)$. 
 
Let $M^{m}_{s}, \bar{M}^{n}_{p}$ be pseudo-Riemannian manifolds, and $g, \bar{g}$ denote pseudo-Riemannian metrics of $M, \bar{M}$, respectively. Let $\phi : M^{m}_{s} \rightarrow \bar{M}^{n}_{p}$ be an isometric immersion, and $h, H$ the second fundamental form and mean curvature vector field of $\phi$, respectively. 
$\phi$ is called \textit{totally geodesic} if $h$ identically vanishes. 
$\phi$ is called \textit{totally umbilical} if, for all $X, Y \in \Gamma(TM)$, it holds 
\begin{equation*} 
h(X,Y)=g(X,Y)H. 
\end{equation*}
$\phi$ is called \textit{minimal} if $H$ identically vanishes. As an easy observation, we see that $\phi$ is totally geodesic if and only if $\phi$ is totally umbilical and minimal. 
In addition to these notions, $\phi$ is called \textit{marginally trapped} if $H \neq 0$ and $\bar{g}(H,H)=0$. 
Finally, when an isometric immersion $\phi : M^{m}_{s} \rightarrow \bar{M}^{n}_{p}$ is totally geodesic, totally umbilical, minimal or marginally trapped, we call the image $\phi(M)$ a totally geodesic, totally umbilical, minimal or marginally trapped submanifold in $\bar{M}$, respectively. 

Let $\phi$ be an $r$-lightlike immersion. We call it \textit{totally geodesic} if $B=0$, and \textit{totally umbilical} if there exists $\mathcal{H} \in \textrm{tr}(TM)$ such that 
\begin{equation*} 
B(X,Y)=g(X,Y)\mathcal{H} 
\end{equation*}
for all $X, Y \in \Gamma(TM)$. 
These notions are independent of the choice of transversally vector bundles.  

Two isometric immersions $\phi_{1}$ and $\phi_{2}$ given by 
\begin{equation*}
\phi_{i} : M^{m}_{s} \rightarrow \bar{M}^{n}_{p} \ (i=1,2) 
\end{equation*}
are said to be \textit{congruent} if there exists an isometry $\Psi$ of $\bar{M}$ such that $\phi_{2} = \Psi \circ \phi_{1}$. 
The congruency defines an equivalence relation on the set consisting of submanifolds. 

For $\varepsilon = \pm 1,0$, we define for brevity
\begin{equation*}
  \mathbb{M}^{n}_{p}(\varepsilon) := 
  \begin{cases}
    \mathbb{E}^{n}_{p} & (\varepsilon = 0), \\
    \mathbb{S}^{n}_{p}(1) \subset \mathbb{E}^{n+1}_{p} & (\varepsilon = 1), \\
    \mathbb{H}^{n}_{p}(-1) \subset \mathbb{E}^{n+1}_{p+1} & (\varepsilon = -1).
  \end{cases}
\end{equation*}

The followings are well-known results in pseudo-Riemannian geometry. 

\begin{thm}[{\cite[Proposition~4, Chapter~1]{An}}] \label{tot_geod_in_Euc} \rm
Any non-degenerate affine subspace in a pseudo-Euclidean space $\mathbb{E}^{n}_{p}$ is a totally geodesic submanifold. Conversely, 
any connected non-degenerate totally geodesic submanifold in $\mathbb{E}^{n}_{p}$ is an open subset of a non-degenerate affine subspace. 
\end{thm}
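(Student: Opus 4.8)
The plan is to treat the two implications separately, exploiting the fact that the Levi--Civita connection $\bar\nabla$ of $\mathbb{E}^{n}_{p}$ is flat: under the canonical identification $T\bar M \cong \mathbb{R}^{n}$, a vector field along a submanifold is just an $\mathbb{R}^{n}$-valued function, and $\bar\nabla_{X}$ is ordinary directional differentiation in the direction $X$. For the first (easy) implication, let $V = x_{0} + W$ be a non-degenerate affine subspace, where $W \subset \mathbb{R}^{n}$ is a linear subspace on which the metric restricts non-degenerately. Then $T_{x}V = W$ is constant, so the orthogonal splitting $\mathbb{E}^{n}_{p} = W \oplus W^{\perp}$ holds at every point of $V$. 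Given tangent vector fields $X, Y$ on $V$, the function $Y$ is $W$-valued, hence so is its directional derivative $\bar\nabla_{X}Y$; thus $h(X,Y) = (\bar\nabla_{X}Y)^{\perp} = 0$ and $V$ is totally geodesic.

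For the converse, let $M$ be connected, non-degenerate and totally geodesic. The crucial step is to show that the tangent plane $T_{x}M$ does not vary with $x$. Here I would use both consequences of total geodesy: $h \equiv 0$ gives $\bar\nabla_{X}Y \in TM$ for tangent $Y$, while the Weingarten formula together with $h \equiv 0$ forces the shape operator to vanish, so $\bar\nabla_{X}N \in TM^{\perp}$ for normal $N$. Now fix an arbitrary constant vector $v \in \mathbb{R}^{n}$ and decompose it pointwise along $M$ as $v = v^{T}(x) + v^{\perp}(x)$ with $v^{T} \in TM$ and $v^{\perp} \in TM^{\perp}$; this is legitimate precisely because non-degeneracy yields the orthogonal direct sum $T\bar M = TM \oplus TM^{\perp}$. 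Differentiating the constant $v$ gives $0 = \bar\nabla_{X}v = \bar\nabla_{X}v^{T} + \bar\nabla_{X}v^{\perp}$, where the first summand is tangent and the second normal; since $TM \cap TM^{\perp} = 0$, each vanishes, so in particular the $\mathbb{R}^{n}$-valued function $v^{T}$ has zero derivative in every tangent direction.

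Connectedness of $M$ then makes $v^{T}$ constant along $M$, so the tangential projection $P_{x} : \mathbb{R}^{n} \to T_{x}M$ is independent of $x$; setting $W := \mathrm{Image}(P)$ gives $T_{x}M = W$ for all $x$. Finally, integrating $\tfrac{d}{dt}(\gamma(t) - x_{0}) = \gamma'(t) \in W$ along paths $\gamma$ in $M$ shows $x - x_{0} \in W$ for every $x \in M$, whence $M \subset x_{0} + W$; as $M$ is $m$-dimensional inside the $m$-dimensional non-degenerate affine subspace $x_{0} + W$, it is an open subset, completing the argument. I expect the main obstacle to be nothing deep but rather the careful bookkeeping that invokes the two non-degeneracy/transversality facts ($TM \cap TM^{\perp} = 0$ and the vanishing of the shape operator) so that the tangential and normal parts genuinely decouple when $v$ is differentiated; once that is in place, flatness of $\bar\nabla$ and connectedness do the rest.
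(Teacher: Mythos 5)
Your proof is correct and complete. Note that the paper itself offers no proof of this statement: it is quoted as a known result with a citation to Anciaux's book, so there is no in-paper argument to compare against. Your argument is the standard one: the forward direction by constancy of the tangent plane of an affine subspace, and the converse by decomposing a constant ambient vector $v = v^{T} + v^{\perp}$, using $h\equiv 0$ (Gauss) and $A\equiv 0$ (Weingarten, where the vanishing of the shape operator uses non-degeneracy of the induced metric via $g(A_{\xi}X,Y)=\bar{g}(h(X,Y),\xi)$) to conclude that $v^{T}$ is parallel, hence constant on the connected $M$, so that $T_{x}M$ is a fixed subspace $W$ and path-integration places $M$ inside $x_{0}+W$ as an open subset. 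An equally common alternative route, closer in spirit to what textbook treatments often do, is to observe that total geodesy forces geodesics of $M$ to be ambient geodesics, i.e.\ straight lines, so a normal neighborhood of each point lies in $x + T_{x}M$ and connectedness glues these affine pieces together; your projection argument avoids the exponential map entirely and is, if anything, more elementary.
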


\begin{rem} \rm
Let $\Pi_{s,t,r}^{m}$ be a \textit{canonical $r$-lightlike $m$-plane} in $\mathbb{E}^{n}_{p}$ with signature $(s,t,r)$, i.e. 
\begin{equation*} 
\Pi_{s,t,r}^{m} := \{(\underbrace{z_{1}, \cdots, z_{r}, x_{1}, \cdots, x_{s}, 0, \cdots, 0}_{p}, \underbrace{0, \cdots, 0, y_{1}, \cdots, y_{t}, z_{1}, \cdots, z_{r}}_{n-p}) \in \mathbb{E}^{n}_{p} \}. 
\end{equation*}
Then, $\Pi_{s,t,r}^{m}$ is a totally geodesic $r$-lightlike submanifold in $\mathbb{E}^{m+n}_{p}$ \cite{BD} and isometric to $\mathbb{E}^{s,t,r}$. 

From Theorem~\ref{tot_geod_in_Euc}, arbitrary subspaces $V \subset \mathbb{E}^{n}_{p}$ are congruent to non-degenerate subspaces $\mathbb{E}^{m}_{s}$, or degenerate subspaces $\Pi_{s,t,r}^{m}$ up to isometry of $\mathbb{E}^{n}_{p}$.
\end{rem}

We define an $n$-dimensional \textit{lightcone} with index $p$ in $\mathbb{E}^{n+1}_{p+1}$ as follows 
\begin{equation*} 
\Lambda^{n}_{p}:=\{ x \in \mathbb{E}^{n+1}_{p+1} \setminus \{0\} \ | \ \langle x,x\rangle_{p} = 0 \}.  
\end{equation*}
The lightcone $\Lambda^{n}_{p}$ is a totally umbilical $1$-lightlike hypersurface in $\mathbb{E}^{n+1}_{p+1}$, that is, the induced metric on $\Lambda^{n}_{p}$ is degenerate \cite{BD}. 

We recall totally geodesic submanifolds in non-flat pseudo-Riemannian space forms. We define a \textit{pseudo $m$-subsphere} of $\mathbb{S}^{n}_{p}(r^2)$ by 
\begin{equation*} 
\left\{(x_{1},\cdots,x_{s},0\cdots,0,x_{s+1},\cdots,x_{m+1}) \in \mathbb{S}^{n}_{p}(r^2) \right\} \cong \mathbb{S}^{m}_{s}(r^{2}). 
\end{equation*}
Analogously, we define a \textit{pseudo-hyperbolic $m$-subspace} of $\mathbb{H}^{n}_{p}(-r^2)$ by
\begin{equation*} 
\left\{(x_{1},\cdots,x_{s+1},0\cdots,0,x_{s+1},\cdots,x_{m+1}) \in \mathbb{H}^{n}_{p}(-r^2) \right\} \cong \mathbb{H}^{m}_{s}(-r^{2}). 
\end{equation*}
Then, the followings hold: 
\begin{thm}[{\cite[Proposition~3.3, 3.4, Chapter~3]{C}}] \rm
Up to isometry, an $m$-dimensional non-degenerate totally geodesic submanifold of an $n$-dimensional pseudo-sphere $\mathbb{S}^{n}_{p}(r^2)$ is an open portion of a pseudo $m$-subsphere. 
Up to isometry, an $m$-dimensional non-degenerate totally geodesic submanifold of an $n$-dimensional pseudo-hyperbolic space $\mathbb{H}^{n}_{p}(-r^2)$ is an open portion of a pseudo-hyperbolic $m$-subspace. 
\end{thm}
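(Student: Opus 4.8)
The plan is to exploit the fact that a non-flat space form sits as a totally umbilical hypersurface in the ambient pseudo-Euclidean space, and to translate the totally geodesic condition \emph{inside} the space form into a statement about the flat connection of the ambient space. I will treat $\mathbb{S}^{n}_{p}(r^2) \subset \mathbb{E}^{n+1}_{p}$ in detail; the pseudo-hyperbolic case is entirely parallel after replacing $\mathbb{E}^{n+1}_{p}$ by $\mathbb{E}^{n+1}_{p+1}$ and adjusting a sign. Let $\phi : M^{m}_{s} \to \mathbb{S}^{n}_{p}(r^2)$ be a non-degenerate totally geodesic isometric immersion, which I may assume connected (otherwise argue on each component); write $x$ for the position vector field of $\mathbb{E}^{n+1}_{p}$ along $\phi$, $\bar{\nabla}$ for the flat connection of $\mathbb{E}^{n+1}_{p}$, $\tilde{\nabla}$ for the Levi--Civita connection of the sphere, and $\nabla$ for that of $M$.

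First I would record the second fundamental form of the inclusion $\mathbb{S}^{n}_{p}(r^2) \hookrightarrow \mathbb{E}^{n+1}_{p}$. Since $\langle x,x\rangle_{p} = r^2$ is constant, $x$ is normal to the sphere and $\bar{\nabla}_{X}x = X$; differentiating $\langle x,x\rangle_{p}$ twice gives $\langle \bar{\nabla}_{X}Y, x\rangle_{p} = -\langle X,Y\rangle_{p}$, so the sphere is totally umbilical with $\bar{\nabla}_{X}Y = \tilde{\nabla}_{X}Y - \tfrac{1}{r^2}\langle X,Y\rangle_{p}\, x$. Combining this with the Gauss formula $\tilde{\nabla}_{X}Y = \nabla_{X}Y + h(X,Y)$ and the hypothesis $h \equiv 0$ yields, for all $X,Y \in \Gamma(TM)$,
\begin{equation*}
\bar{\nabla}_{X}Y = \nabla_{X}Y - \frac{1}{r^2}\langle X,Y\rangle_{p}\, x.
\end{equation*}

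The crucial step is then to show that the rank-$(m+1)$ distribution $W := \mathbb{R}x \oplus \phi_{\ast}TM$ along $M$ is invariant under $\bar{\nabla}$. Indeed $\bar{\nabla}_{X}x = X \in \phi_{\ast}TM \subset W$, while the displayed formula shows $\bar{\nabla}_{X}(\phi_{\ast}Y) \in \phi_{\ast}TM + \mathbb{R}x = W$. Because $\bar{\nabla}$ is the flat connection of $\mathbb{E}^{n+1}_{p}$, a parallel subbundle is a \emph{constant} linear subspace $W_{0} \subset \mathbb{E}^{n+1}_{p}$; since $M$ is connected, $W = W_{0}$ everywhere, and in particular $\phi(M) \subset W_{0} \cap \mathbb{S}^{n}_{p}(r^2)$. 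Moreover $W_{0} = \mathbb{R}x \oplus T_{x}M$ is an orthogonal direct sum of the non-degenerate space $T_{x}M$ of index $s$ and the line $\mathbb{R}x$ with $\langle x,x\rangle_{p} = r^2 > 0$, so $W_{0}$ is non-degenerate of dimension $m+1$ and index $s$; that is, $(W_{0}, \langle\cdot,\cdot\rangle_{p}) \cong \mathbb{E}^{m+1}_{s}$.

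It remains to pass to the normal form. By Witt's theorem the group $O(p,n+1-p)$ of linear isometries of $\mathbb{E}^{n+1}_{p}$ acts transitively on non-degenerate subspaces of a given dimension and index, so after composing $\phi$ with such an isometry --- which preserves $\mathbb{S}^{n}_{p}(r^2)$ --- I may assume $W_{0}$ is the standard coordinate subspace defining the pseudo $m$-subsphere. Then $W_{0} \cap \mathbb{S}^{n}_{p}(r^2) \cong \mathbb{S}^{m}_{s}(r^2)$ is exactly a pseudo $m$-subsphere, and since $\phi$ is an immersion of an $m$-manifold into this $m$-dimensional intersection, $\phi(M)$ is an open portion of it. The pseudo-hyperbolic statement follows the same steps: there $x$ is timelike with $\langle x,x\rangle_{p+1} = -r^2$, the umbilic relation becomes $\bar{\nabla}_{X}Y = \nabla_{X}Y + \tfrac{1}{r^2}\langle X,Y\rangle_{p+1}\, x$, the constant subspace $W_{0}$ now has index $s+1$ so $(W_{0},\langle\cdot,\cdot\rangle_{p+1}) \cong \mathbb{E}^{m+1}_{s+1}$, and $W_{0} \cap \mathbb{H}^{n}_{p}(-r^2) \cong \mathbb{H}^{m}_{s}(-r^2)$ is a pseudo-hyperbolic $m$-subspace. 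I expect the only genuinely delicate points to be the non-degeneracy and exact index of $W_{0}$ --- which is what guarantees the intersection is of the claimed \emph{type} rather than some degenerate or wrong-signature quadric --- together with the clean invocation of transitivity of the isometry group; connectedness of $M$ is used precisely to turn the pointwise-parallel distribution into a single fixed subspace.
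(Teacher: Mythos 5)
Your proof is correct, but there is nothing in the paper to compare it against line by line: the paper does not prove this theorem, it imports it verbatim from Chen's book (Propositions~3.3 and 3.4 of Chapter~3 there). Every step of your argument checks out --- the umbilic relation $\bar{\nabla}_{X}Y=\tilde{\nabla}_{X}Y-\tfrac{1}{r^{2}}\langle X,Y\rangle_{p}\,x$ for the inclusion of the quadric, the $\bar{\nabla}$-parallelism of $W=\mathbb{R}x\oplus\phi_{\ast}TM$, its constancy on a connected $M$ under the flat connection, the signature count giving $W_{0}\cong\mathbb{E}^{m+1}_{s}$ (resp.\ $\mathbb{E}^{m+1}_{s+1}$ in the hyperbolic case), and the normalization of $W_{0}$ by Witt's theorem, which uses linear isometries and hence preserves the quadric. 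It is worth recording how your route relates to the machinery the paper does use for its main results: your parallel-subbundle step is exactly the Erbacher--Magid reduction (Lemma~\ref{reduction}) carried out by hand for $f=\iota\circ\phi:M\to\mathbb{E}^{n+1}_{p}$, which is totally umbilical with $\tilde{H}=-x/r^{2}$, so that the first normal space is $N^{1}=\mathbb{R}x$ and the reduced subspace is your $W_{0}$; and your final step --- intersecting the constant subspace with the quadric and normalizing up to congruence --- is the same strategy as the paper's Section~3 proof of Theorems~\ref{main1} and \ref{main2}, where $\phi(M)\subset\mathbb{S}^{n}_{p}(1)\cap E^{\ast}$ and the possible $E^{\ast}$ are checked case by case. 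In your setting the case check collapses, because total geodesy forces $E^{\ast}=W_{0}$ to be a non-degenerate linear subspace through the origin (it contains all position vectors), whereas the paper's umbilical case must also handle translated and lightlike subspaces; this is precisely the content of your index computation, and it is the genuinely delicate point you correctly flagged. The one hypothesis to keep explicit, as you do in passing, is connectedness of $M$: without it, different components could lie in different pseudo subspheres, and the single-subsphere conclusion would fail.
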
 

Here, we refer a classification of totally umbilical submanifolds in pseudo-Euclidean spaces.
\begin{thm}[{\cite[Proposition~3.1]{AKK}}, {\cite[Theorem~1.4]{Ma}}] \label{tot_umb_Euc} \rm
Let $\phi : M^{m}_{s} \rightarrow \mathbb{E}^{n}_{p}$ be a totally umbilical isometric immersion and $H$ its mean curvature vector field. 
Then, the image is congruent to an open portion of one of the following submanifolds: 
\begin{itemize}
\item[(1)] a totally geodesic pseudo-Euclidean subspace $\mathbb{E}^{m}_{s} \subset \mathbb{E}^{n}_{p}$ ($H=0$); 
\item[(2)] a pseudo $m$-sphere $\mathbb{S}^{m}_{s}(r^2) \hookrightarrow \mathbb{E}^{m+1}_{s} \subset \mathbb{E}^{n}_{p}$ ($\langle H, H \rangle_{p} > 0$); 
\item[(3)] a pseudo-hyperbolic $m$-space $\mathbb{H}^{m}_{s}(-r^2) \hookrightarrow \mathbb{E}^{m+1}_{s+1} \subset \mathbb{E}^{n}_{p}$ ($\langle H, H \rangle_{p} < 0$); 
\item[(4)] a flat marginally trapped submanifold $\mathbb{U}^{m}_{s}$ defined by 
\begin{equation*} 
\mathbb{E}^{m}_{s} \rightarrow \mathbb{E}^{m+2}_{s+1} \subset \mathbb{E}^{n}_{p} \ ; \ x \mapsto \left(\langle x, x \rangle_{s} + \frac{1}{4}, x, \langle x, x \rangle_{s} - \frac{1}{4}\right) \ (H \neq 0, \ \langle H, H \rangle_{p} = 0). 
\end{equation*} 
\end{itemize}
\end{thm}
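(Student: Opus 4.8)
The plan is to squeeze all information out of the umbilicity condition $h(X,Y)=g(X,Y)H$ by feeding it into the Gauss and Codazzi equations of the flat ambient space, and then to integrate the resulting structure equations separately in the three causal cases for $H$; throughout I take $M$ connected and $m\ge 2$ (the case $m=1$ of curves being disposed of by direct integration of the Frenet-type equations). First I would differentiate the umbilicity relation. Writing $\nabla^{\perp}$ for the normal connection and using that $\nabla$ is metric, a short computation gives $(\nabla^{\perp}_{X}h)(Y,Z)=g(Y,Z)\,\nabla^{\perp}_{X}H$. Since $\mathbb{E}^{n}_{p}$ is flat, the Codazzi equation reads $(\nabla^{\perp}_{X}h)(Y,Z)=(\nabla^{\perp}_{Y}h)(X,Z)$, so that $g(Y,Z)\,\nabla^{\perp}_{X}H=g(X,Z)\,\nabla^{\perp}_{Y}H$. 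Setting $Z=Y$ non-null with $X$ orthogonal to $Y$ forces $\nabla^{\perp}_{X}H=0$, and since every tangent vector is orthogonal to some non-null vector when $m\ge 2$, I conclude $\nabla^{\perp}H=0$. Hence $\langle H,H\rangle_{p}$ is constant on $M$, yielding the trichotomy $\langle H,H\rangle_{p}>0$, $<0$, or $=0$ (with $H=0$ a degenerate subcase). Simultaneously the Gauss equation combined with $h(X,Y)=g(X,Y)H$ shows the intrinsic curvature tensor of $M$ equals $\langle H,H\rangle_{p}$ times the constant-curvature model tensor, so $M$ is intrinsically a space form of curvature $\langle H,H\rangle_{p}$, already predicting the three model types.

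Next I would treat the non-null case $\langle H,H\rangle_{p}\ne 0$. The Weingarten formula and umbilicity give $A_{H}=\langle H,H\rangle_{p}\,\mathrm{Id}$, so, using $\nabla^{\perp}H=0$, the ambient-space valued map $c:=\phi+\langle H,H\rangle_{p}^{-1}H$ satisfies $\bar\nabla_{X}c=X-X=0$ for every $X$; thus $c$ is a fixed point of $\mathbb{E}^{n}_{p}$ and $\langle\phi-c,\phi-c\rangle_{p}=\langle H,H\rangle_{p}^{-1}$ is constant. Therefore $\phi(M)$ lies in a pseudo-sphere $\mathbb{S}^{n-1}_{p}$ (if $\langle H,H\rangle_{p}>0$) or a pseudo-hyperbolic hypersurface $\mathbb{H}^{n-1}_{p-1}$ (if $\langle H,H\rangle_{p}<0$) centered at $c$, and since $\phi-c$ is proportional to $H$ the vector $H$ is radial, hence normal to this hypersurface, so the part of $h$ tangent to the hypersurface vanishes and $\phi$ is totally geodesic inside it. The quoted classification of totally geodesic submanifolds of pseudo-spheres and pseudo-hyperbolic spaces then identifies the image with an open portion of a pseudo $m$-subsphere or a pseudo-hyperbolic $m$-subspace, giving cases (2) and (3); the subcase $H=0$ is totally geodesic in $\mathbb{E}^{n}_{p}$ and Theorem~\ref{tot_geod_in_Euc} yields case (1).

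The remaining null case $H\ne 0$, $\langle H,H\rangle_{p}=0$ is where the previous device degenerates, and this I expect to be the main obstacle. Here $A_{H}=\langle H,H\rangle_{p}\,\mathrm{Id}=0$, so the Weingarten formula improves to $\bar\nabla_{X}H=0$: the mean curvature is now an honest constant null vector $\mathbf{a}\in\mathbb{E}^{n}_{p}$, and the Gauss equation forces $M$ to be flat. Choosing flat coordinates $(x^{1},\dots,x^{m})$ with $\langle\partial_{i},\partial_{j}\rangle_{s}=\epsilon_{i}\delta_{ij}$ and vanishing Christoffel symbols, the Gauss formula collapses to the linear system $\partial_{i}\partial_{j}\phi=\epsilon_{i}\delta_{ij}\,\mathbf{a}$, which integrates to
\[
\phi=\tfrac{1}{2}\langle x,x\rangle_{s}\,\mathbf{a}+\sum_{i}x^{i}\mathbf{e}_{i}+\mathbf{c},
\qquad \mathbf{e}_{i}:=\partial_{i}\phi|_{0}.
\]
Imposing the isometry condition $\langle\partial_{i}\phi,\partial_{j}\phi\rangle_{p}=\epsilon_{i}\delta_{ij}$ together with the normality $\langle\mathbf{a},\partial_{i}\phi\rangle_{p}=0$, and using $\langle\mathbf{a},\mathbf{a}\rangle_{p}=0$, forces $\{\mathbf{e}_{i}\}$ to be an orthonormal family of signature $(s,m-s)$ orthogonal to the null vector $\mathbf{a}$.

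Finally I would read off the congruence. Completing $\mathbf{a}$ to a null pair $\{\mathbf{a},\mathbf{b}\}$ with $\langle\mathbf{a},\mathbf{b}\rangle_{p}=1$ and $\mathbf{b}\perp\mathbf{e}_{i}$, the affine span of the image is the non-degenerate subspace $\mathrm{span}(\mathbf{a},\mathbf{b},\mathbf{e}_{1},\dots,\mathbf{e}_{m})\cong\mathbb{E}^{m+2}_{s+1}$, and rewriting $\phi$ in the adapted null coordinates matches it, after an isometry of $\mathbb{E}^{n}_{p}$, with the model $\mathbb{U}^{m}_{s}$ of case (4). The two delicate points are establishing $\nabla^{\perp}H=0$ uniformly (this is exactly where $m\ge 2$ is needed) and, in the null case, organizing the integration data $\mathbf{a}$, $\mathbf{e}_{i}$, $\mathbf{c}$ into a genuine null frame so that one obtains a congruence with the explicit $\mathbb{U}^{m}_{s}$ rather than merely an abstract flat marginally trapped immersion.
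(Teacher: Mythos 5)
The paper never proves this statement: Theorem~\ref{tot_umb_Euc} is imported by citation from Ahn--Kim--Kim \cite{AKK} and Magid \cite{Ma}, so there is no internal proof to compare yours against. Judged on its own, your argument is essentially correct, and it is in substance the standard proof of this classification: Codazzi in the flat ambient space gives $\nabla^{\perp}H=0$; when $\langle H,H\rangle_{p}\neq 0$ the point $c=\phi+\langle H,H\rangle_{p}^{-1}H$ is constant, the image lies in a central quadric in which it is totally geodesic, and the classification of totally geodesic submanifolds of pseudo-spheres and pseudo-hyperbolic spaces (quoted in the paper from \cite{C}) yields cases (2) and (3); when $H\neq 0$ is null, $\bar{\nabla}H=0$ and flatness let you integrate $\partial_{i}\partial_{j}\phi=\epsilon_{i}\delta_{ij}\mathbf{a}$ explicitly and a Witt-basis extension matches the result with $\mathbb{U}^{m}_{s}$, giving (4). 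It is worth noting that this is a genuinely different technique from the one the paper uses for its own main results: in proving Theorems~\ref{main1} and \ref{main2} the author does not differentiate the umbilicity identity but instead invokes the Erbacher--Magid reduction (Lemma~\ref{reduction}) to trap the image in an $(m+1)$-dimensional totally geodesic subspace $E^{\ast}$ and then classifies the intersections $\mathbb{S}^{n}_{p}(1)\cap E^{\ast}$. Your direct-integration route would prove those theorems too (via Lemma~\ref{umb_umb}), while the paper's reduction route would equally prove the present theorem; the two approaches are interchangeable here, yours being more elementary and self-contained, the paper's generalizing more readily to parallel (rather than just umbilical) immersions.

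Three small repairs, none fatal. First, your claim that for $m\geq 2$ every tangent vector is orthogonal to some non-null vector fails for a null $X$ in signature $(1,1)$, where $X^{\perp}=\mathbb{R}X$; the conclusion survives because $\nabla^{\perp}_{X}H=0$ holds for every non-null $X$, the non-null vectors span $T_{x}M$, and $X\mapsto\nabla^{\perp}_{X}H$ is linear. Second, in the null case the affine span of the image is the \emph{degenerate} $(m+1)$-plane $\mathbf{c}+\mathrm{span}(\mathbf{a},\mathbf{e}_{1},\dots,\mathbf{e}_{m})$, not the non-degenerate span of $\mathbf{a},\mathbf{b},\mathbf{e}_{i}$; what your Witt-pair construction actually shows is that the image sits inside that non-degenerate $(m+2)$-plane and is congruent there to $\mathbb{U}^{m}_{s}$ (consistent with the paper's later remark that $\mathbb{U}^{m}_{s}$ lies in the degenerate hyperplane $N^{m+1}(0)$). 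Third, drop the aside that $m=1$ is disposed of by Frenet integration: for curves the umbilicity condition is vacuous (every non-degenerate unit-speed curve satisfies $h(T,T)=g(T,T)H$), so the classification is false for $m=1$ and the hypothesis $m\geq 2$, implicit in the cited sources, is genuinely needed rather than a removable convenience.
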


An isometric immersion $\phi : M^{m}_{s} \rightarrow \mathbb{M}^{n}_{p}(\varepsilon)$ is called \textit{full} if the image $\phi(M)$ is not contained in any non-degenerate totally geodesic hypersurface in $\mathbb{M}^{n}_{p}(\varepsilon)$.

\begin{lem}[Erbacher--Magid Reduction Theorem, {\cite[Theorem]{E}}, {\cite[Theorem~0.2]{Ma}}] \rm \label{reduction} 
Let $\phi : M^{m}_{s} \rightarrow \mathbb{E}^{n}_{p}$ be an isometric immersion. For each $x \in M^{m}_{s}$, we define 
\begin{equation*} 
N^{0}(x) := \{\xi \in T_{x}^{\bot}M \ | \ A_{\xi} = 0 \} 
\end{equation*}
and define a \textit{first normal space} as the orthogonal complement of $N^{0}(x)$, i.e. 
\begin{equation*} 
N^{1}(x) = (N^{0}(x))^{\bot}. 
\end{equation*}
If a normal subbundle 
$N^{1} = \bigcup_{x \in M} N^{1}(x) \subset T^{\bot}M$ is parallel with respect to the normal connection, then there exists a geodesically complete $(m+k)$-dimensional (possibly lightlike) totally geodesic submanifold $E^{\ast} \subset \mathbb{E}^{n}_{p}$ such that $\phi(M) \subset E^{\ast}$, where $k = \textrm{rank} N^{1}$. 
\end{lem}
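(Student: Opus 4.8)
The plan is to construct an $(m+k)$-dimensional affine subspace of $\mathbb{E}^n_p$ that contains $\phi(M)$; by Theorem~\ref{tot_geod_in_Euc} together with the Remark after it, every affine subspace is automatically a geodesically complete totally geodesic submanifold, degenerate or not, so such a subspace will serve as the desired $E^\ast$. To build it I would work with the rank-$(m+k)$ subbundle
\begin{equation*}
E := \phi_\ast TM \oplus N^1 \subset \phi^\ast T\mathbb{E}^n_p ,
\end{equation*}
where $\phi_\ast TM$ denotes the image of the tangent bundle. Since $M$ is non-degenerate, $\phi_\ast TM \cap T^\perp M = 0$ and $\phi_\ast TM \perp N^1$, so the sum is direct and $\operatorname{rank} E = m+k$. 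The heart of the argument is to show that $E$ is invariant under the flat ambient connection $\bar\nabla$ along $M$, i.e. $\bar\nabla_X \sigma \in \Gamma(E)$ for every $X \in \Gamma(TM)$ and every $\sigma \in \Gamma(E)$.

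This verification splits according to whether $\sigma$ is tangential or lies in $N^1$, and rests on the Gauss and Weingarten formulas
\begin{equation*}
\bar\nabla_X Y = \nabla_X Y + h(X,Y), \qquad \bar\nabla_X \xi = -A_\xi X + \nabla^\perp_X \xi .
\end{equation*}
For the tangential case, $\nabla_X Y \in \Gamma(\phi_\ast TM)$, and I would first check that $h$ takes values in $N^1$: for $\xi \in N^0$ one has $\langle h(X,Y), \xi\rangle = \langle A_\xi X, Y\rangle = 0$, so $h(X,Y) \perp N^0$, that is $h(X,Y) \in (N^0)^\perp = N^1$. Hence $\bar\nabla_X Y \in \Gamma(E)$. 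For $\xi \in \Gamma(N^1)$, the term $-A_\xi X$ is tangential, while $\nabla^\perp_X \xi \in \Gamma(N^1)$ precisely by the standing hypothesis that $N^1$ is parallel for the normal connection. Thus $\bar\nabla_X \xi \in \Gamma(E)$ as well, and $E$ is $\bar\nabla$-parallel along $M$.

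Because $\mathbb{E}^n_p$ is flat, its Levi--Civita connection is the standard one and all tangent spaces are canonically identified with $\mathbb{R}^n$; under this identification the $\bar\nabla$-parallelism of $E$ says exactly that the subspace $E_x \subset \mathbb{R}^n$ is locally constant, hence constant on each connected component of $M$, equal to a fixed $E_0$. Fixing a base point $x_0$ and integrating $\tfrac{d}{dt}\phi(\gamma(t)) = \phi_\ast \gamma'(t) \in E_0$ along curves $\gamma$ issuing from $x_0$ then gives $\phi(M) \subset \phi(x_0) + E_0 =: E^\ast$, an affine subspace of dimension $m+k$. By Theorem~\ref{tot_geod_in_Euc} and the Remark after it, $E^\ast$ is congruent either to a non-degenerate subspace $\mathbb{E}^{m+k}_{s'}$ or to a canonical lightlike plane $\Pi^{m+k}_{s,t,r}$, both geodesically complete and totally geodesic.

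The one genuinely pseudo-Riemannian difficulty, and the reason for the ``(possibly lightlike)'' qualifier, is that $N^1$ — and therefore $E = \phi_\ast TM \oplus N^1$ — may be a degenerate subspace of $\mathbb{E}^n_p$. In the Riemannian setting positive-definiteness forces $T^\perp M = N^0 \oplus N^1$ to be an orthogonal splitting into non-degenerate pieces, so $E^\ast$ is an honest Euclidean subspace; here I must instead allow $E^\ast$ to be one of the degenerate totally geodesic planes $\Pi^{m+k}_{s,t,r}$, and I must be careful to read the identities $h(X,Y) \in N^1$ and $T^\perp M = N^0 \oplus N^1$ as annihilator/complement statements, valid because $T^\perp M$ is non-degenerate even though the induced metric on $N^1$ itself may degenerate. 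Once this is handled, the remainder of the argument is formally identical to Erbacher's original flat-space reduction.
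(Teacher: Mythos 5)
Your proof is correct, and it is essentially the proof of the result the paper is quoting: the paper itself gives no argument for this lemma but cites Erbacher \cite{E} and Magid \cite{Ma}, and your reasoning (show $E=\phi_{\ast}TM\oplus N^{1}$ is invariant under the flat ambient connection via the Gauss and Weingarten formulas, conclude $E$ is a constant subspace, then integrate to get $\phi(M)\subset\phi(x_{0})+E_{0}$) is exactly the classical reduction-of-codimension argument, correctly transplanted to the indefinite setting. You also isolate the right subtlety: the argument only needs $h(X,Y)\in N^{1}=(N^{0})^{\perp}$ and the parallelism of $N^{1}$, never the decomposition $T^{\perp}M=N^{0}\oplus N^{1}$ (which can fail when $N^{0}$ is degenerate), which is why $E^{\ast}$ must be allowed to be a lightlike plane $\Pi^{m+k}_{s,t,r}$ rather than a pseudo-Euclidean subspace.
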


\begin{rem} \rm 
Let $M^{m}$ be a geodesically complete $m$-dimensional (possibly lightlike) totally geodesic submanifold in a pseudo-Euclidean space $\mathbb{E}^{n}_{p}$. Then, up to translation, $M^{m}$ coincides with a subspace of $\mathbb{E}^{n}_{p}$ because of the completeness. 
This claim holds by the fact that any geodesic of $M^{m}$ is a geodesic of $\mathbb{E}^{n}_{p}$, i.e. a line segment. 
\end{rem} 

\begin{lem}[{\cite[Corollary~3.1, Chapter~3]{C}}] \rm \label{umb_umb}
Let $\phi : M^{m}_{s} \rightarrow \mathbb{S}^{n}_{p}(1)$ (resp. $\mathbb{H}^{n}_{p}(-1)$) be an isometric immersion, and $\iota : \mathbb{S}^{n}_{p}(1) \rightarrow \mathbb{E}^{n+1}_{p}$ (resp. $\mathbb{E}^{n+1}_{p+1}$) the canonical inclusion map. When we set a mapping $f = \iota \circ \phi$, the followings hold:
\begin{itemize}
\item[(1)] $\phi$ has parallel mean curvature vector if and only if $f$ has parallel mean curvature vector; 
\item[(2)] $\phi$ is parallel if and only if $f$ is parallel; 
\item[(3)] $\phi$ is totally umbilical if and only if $f$ is totally umbilical.
\end{itemize}
\end{lem}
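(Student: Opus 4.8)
The plan is to reduce all three equivalences to a single comparison formula between the second fundamental forms of $\phi$ and $f$, exploiting the fact that the canonical inclusion $\iota$ is itself an explicit totally umbilical immersion. Throughout, write $\varepsilon = +1$ in the pseudo-sphere case and $\varepsilon = -1$ in the pseudo-hyperbolic case, so that the ambient target of $\phi$ is $\mathbb{M}^{n}_{p}(\varepsilon)$ and the position vector field $P(x)=x$ satisfies $\langle P, P \rangle = \varepsilon$.

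First I would record the extrinsic geometry of $\iota$. Since $\bar{\nabla}_{X}P = X$ for every $X$ tangent to the ambient pseudo-Euclidean space (the Levi--Civita connection there being the flat directional derivative), the Weingarten formula gives $A_{P} = -\,\mathrm{Id}$ and $\nabla^{\perp}_{X}P = 0$; equivalently the second fundamental form of $\iota$ is $II_{\iota}(V,W) = -\varepsilon \langle V, W \rangle\, P$ for $V,W$ tangent to $\mathbb{M}^{n}_{p}(\varepsilon)$. Applying the Gauss formula twice to $f = \iota \circ \phi$ (once for $\phi$ inside the space form, once for $\iota$) then yields the fundamental relation
\begin{equation}
  h_{f}(X,Y) = h_{\phi}(X,Y) - \varepsilon\, g(X,Y)\, x \tag{$\ast$}
\end{equation}
for all $X,Y \in \Gamma(TM)$, where $h_{\phi}(X,Y)$ is tangent to the space form and $-\varepsilon g(X,Y)x$ is orthogonal to it. In particular the normal bundle of $f$ splits as the orthogonal direct sum $T^{\bot}_{f}M = T^{\bot}_{\phi}M \oplus \mathbb{R}x$ of non-degenerate subbundles.

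Statement (3) is then immediate from $(\ast)$: if $h_{\phi} = g \otimes H_{\phi}$ then $h_{f} = g \otimes (H_{\phi} - \varepsilon x)$, so $f$ is totally umbilical; conversely, projecting $h_{f} = g \otimes H_{f}$ onto the two factors of $T^{\bot}_{f}M$ forces the $x$-component of $H_{f}$ to be $-\varepsilon x$, and $(\ast)$ then identifies $h_{\phi}$ with $g$ times the $T^{\bot}_{\phi}M$-component of $H_{f}$, so $\phi$ is totally umbilical. For statements (1) and (2) I would first compare the normal connections. Using the explicit $II_{\iota}$, a section $\xi$ of $T^{\bot}_{\phi}M$ satisfies $\bar{\nabla}_{X}\xi = \tilde{\nabla}_{X}\xi$ because $\langle \phi_{\ast}X, \xi \rangle = 0$; hence $\nabla^{\perp,f}$ restricted to $T^{\bot}_{\phi}M$ coincides with $\nabla^{\perp,\phi}$, while $\nabla^{\perp,f}_{X}x = (\bar{\nabla}_{X}x)^{\bot_{f}} = (f_{\ast}X)^{\bot_{f}} = 0$, so $x$ is an $f$-parallel normal field. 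Tracing $(\ast)$ gives $H_{f} = H_{\phi} - \varepsilon x$, whence $\nabla^{\perp,f}H_{f} = \nabla^{\perp,\phi}H_{\phi}$ and (1) follows. For (2) I would differentiate $(\ast)$ covariantly: by metric compatibility the derivatives of $-\varepsilon g(X,Y)x$ cancel against the Koszul terms, using the parallelism of $x$, leaving $\bar{\nabla}h_{f} = \tilde{\nabla}h_{\phi}$, which gives (2).

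The main obstacle is the careful bookkeeping in the indefinite setting: tracking the sign $\varepsilon$, verifying that $T^{\bot}_{f}M = T^{\bot}_{\phi}M \oplus \mathbb{R}x$ is a genuine orthogonal direct sum of non-degenerate bundles so that tangential and $x$-components can be separated unambiguously in $(\ast)$, and confirming that $x$ is parallel for the $f$-normal connection. Once $(\ast)$ and these normal-connection identities are established, each of the three equivalences reduces to a purely algebraic observation, so no delicate analysis is needed.
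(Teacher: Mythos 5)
The paper itself gives no proof of this lemma---it is quoted verbatim from Chen's book \cite[Corollary~3.1, Chapter~3]{C}---and your argument is precisely the standard one behind that citation, so there is nothing that diverges to compare. Your computation is correct as stated: the relation $h_{f}(X,Y)=h_{\phi}(X,Y)-\varepsilon\, g(X,Y)\,x$ with the orthogonal, non-degenerate splitting $T^{\bot}_{f}M=T^{\bot}_{\phi}M\oplus\mathbb{R}x$, the identities $\nabla^{\bot,f}\xi=\nabla^{\bot,\phi}\xi$ for $\xi\in\Gamma(T^{\bot}_{\phi}M)$ and $\nabla^{\bot,f}_{X}x=0$, the trace formula $H_{f}=H_{\phi}-\varepsilon x$, and the metric-compatibility cancellation in the covariant derivative of $h_{f}$ all hold with the signs you give, and together they deliver (1)--(3).
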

See Section 4 in this paper for the definition of parallel isometric immersions. 
The followings are main results in this paper. 
\begin{thm} \rm \label{main1}
If $\phi : M^{m}_{s} \rightarrow \mathbb{S}^{n}_{p}(1)$ is a full totally umbilical isometric immersion, $\bar{g}$ is the metric of $\mathbb{S}^{n}_{p}(1)$, and $H$ is its mean curvature vector field, 
then, up to isometry, the image is congruent to an open portion of one of the followings: 
\begin{itemize}
\item[(1)] $\mathbb{S}^{m}_{s}(1) \rightarrow \mathbb{S}^{m+1}_{s}(1) \subset \mathbb{S}^{n}_{p}(1) \ ; \ x \mapsto (x,0)$ \ (totally geodesic, $H=0$); 
\item[(2)] $\mathbb{S}^{m}_{s}(1) \rightarrow \mathbb{S}^{m+1}_{s+1}(1) \subset \mathbb{S}^{n}_{p}(1) \ ; \  x \mapsto (0,x)$ \ (totally geodesic, $H=0$); 
\item[(3)] $\mathbb{S}^{m}_{s}\left(r^2\right) \rightarrow \mathbb{S}^{m+1}_{s}(1) \subset \mathbb{S}^{n}_{p}(1) \ ; \  x \mapsto (x, \sqrt{1-r^2})$ \ ($0<r<1, \ \bar{g}(H,H) > 0$); 
\item[(4)] $\mathbb{S}^{m}_{s}\left(r^2\right) \rightarrow \mathbb{S}^{m+1}_{s+1}(1) \subset \mathbb{S}^{n}_{p}(1) \ ; \  x \mapsto (\sqrt{r^2-1}, x)$ \ ($r>1, \ -1 < \bar{g}(H,H) < 0$); 
\item[(5)] $\mathbb{S}^{m}_{s}(1) \rightarrow \mathbb{S}^{m+2}_{s+1}(1) \subset \mathbb{S}^{n}_{p}(1) \ ; \  x \mapsto (1,x,1)$ \ ($H \neq 0, \ \bar{g}(H,H) = 0$) ; 
\item[(6)] $\mathbb{H}^{m}_{s}\left(-r^2\right) \rightarrow \mathbb{S}^{m+1}_{s+1}(1) \subset \mathbb{S}^{n}_{p}(1) \ ; \  x \mapsto (x, \sqrt{1+r^2})$ \ ($r>0, \ \bar{g}(H,H) < -1$); 
\item[(7)] $\mathbb{E}^{m}_{s} \rightarrow \mathbb{S}^{m+1}_{s+1}(1) \subset \mathbb{S}^{n}_{p}(1) \ ; \  x \mapsto \left(\langle x, x \rangle_{s}-\dfrac{3}{4},x,\langle x, x \rangle_{s}-\dfrac{5}{4}\right)$ \ ($\bar{g}(H,H) = -1$). 
\end{itemize}
Moreover, when $M^{m}_{s}$ is geodesically complete, the image globally coincides with one of the above list (1)--(7). 
\end{thm}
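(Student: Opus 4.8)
The plan is to reduce the classification in $\mathbb{S}^{n}_{p}(1)$ to the already-established Euclidean classification (Theorem~\ref{tot_umb_Euc}) by composing with the canonical inclusion. Write $\iota : \mathbb{S}^{n}_{p}(1) \hookrightarrow \mathbb{E}^{n+1}_{p}$ and set $f = \iota \circ \phi$. By Lemma~\ref{umb_umb}(3), $f$ is again totally umbilical, so Theorem~\ref{tot_umb_Euc} applies to $f$ and presents $f(M)$, up to isometry of $\mathbb{E}^{n+1}_{p}$, as an open portion of one of the four Euclidean models: a flat affine subspace, a pseudo-sphere, a pseudo-hyperbolic space, or the flat marginally trapped $\mathbb{U}^{m}_{s}$. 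The whole task is then to read off which of these are compatible with the constraint $\langle f, f \rangle_{p} = 1$ that places the image on the unit pseudo-sphere, and to translate the Euclidean invariants into the spherical ones.

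The key bridge is the Gauss formula for the pair $\phi, \iota$. Since the position vector $f$ is the normal of $\mathbb{S}^{n}_{p}(1)$ with $\langle f, f \rangle_{p} = 1$ and $H$ is tangent to the sphere, the mean curvature vectors satisfy $\tilde{H} = H - f$, where $\tilde{H}$ is the mean curvature of $f$. Hence I obtain the dictionary
\[
\langle \tilde{H}, \tilde{H} \rangle_{p} = \bar{g}(H,H) + 1 .
\]
Because the Euclidean trichotomy in Theorem~\ref{tot_umb_Euc} is governed by the sign of $\langle \tilde{H}, \tilde{H} \rangle_{p}$, this identity turns it into the spherical classification governed by the position of $\bar{g}(H,H)$ relative to $0$ and $-1$: the affine case $\tilde{H}=0$ is excluded outright, since a flat affine subspace cannot lie on the curved sphere for $m \geq 1$; the pseudo-sphere case $\langle \tilde{H}, \tilde{H} \rangle_{p} > 0$ gives $\bar{g}(H,H) > -1$ and produces cases (1)--(5); the pseudo-hyperbolic case $\langle \tilde{H}, \tilde{H} \rangle_{p} < 0$ gives $\bar{g}(H,H) < -1$ and produces case (6); and the marginally trapped case $\langle \tilde{H}, \tilde{H} \rangle_{p} = 0$ gives $\bar{g}(H,H) = -1$ and produces case (7).

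Next I would impose $\langle f, f \rangle_{p} = 1$ on each surviving model to pin down its position and identify it with the explicit list. For a Euclidean pseudo-sphere of radius $\rho$ and centre $c$ spanning a linear space $V$, writing $f = c + v$ with $\langle v, v \rangle_{p} = \rho^{2}$ and requiring $\langle f, f \rangle_{p} \equiv 1$ forces $c \perp V$ and $\langle c, c \rangle_{p} = 1 - \rho^{2}$; combined with $\langle \tilde{H}, \tilde{H} \rangle_{p} = 1/\rho^{2}$ this recovers $\bar{g}(H,H) = (1-\rho^{2})/\rho^{2}$ and shows that the causal character of $c$ is the sole remaining invariant: $c = 0$ (so $\rho = 1$) gives the totally geodesic pseudo-subspheres (1)/(2), a spacelike $c$ (so $\rho < 1$) gives (3), a timelike $c$ (so $\rho > 1$) gives (4), and a lightlike $c \neq 0$ (so $\rho = 1$) gives the marginally trapped case (5). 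The pseudo-hyperbolic model is handled identically, now with $c$ spacelike and $\langle c, c \rangle_{p} = 1 + \rho^{2}$, yielding (6); the flat model $\mathbb{U}^{m}_{s}$ is placed on the sphere by a lightlike translation uniquely determined by $\langle f, f \rangle_{p} \equiv 1$, giving (7). The split of the totally geodesic case into (1) and (2), and the bookkeeping $s$ versus $s+1$ throughout, then comes from the causal type of the normal directions adjoined to $V$.

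Finally, fullness enters to fix the codimension: together with the Erbacher--Magid reduction theorem (Lemma~\ref{reduction}) it guarantees that $f(M)$ spans exactly the minimal subspace compatible with the model, so that the image lies in the stated totally geodesic subsphere $\mathbb{S}^{m+1}$ or $\mathbb{S}^{m+2}$ and in no smaller non-degenerate totally geodesic hypersurface. The geodesic-completeness addendum then follows because each listed model is complete and a connected complete totally umbilical submanifold must coincide with the whole model rather than an open portion. I expect the main obstacle to be the marginally trapped case (5): here the lightlike centre makes the linear span of the image a \emph{degenerate} subspace, so identifying the correct minimal non-degenerate subsphere $\mathbb{S}^{m+2}_{s+1}(1)$ (codimension two and co-index one) and verifying genuine fullness --- the image does lie in a degenerate totally geodesic hypersurface but in no non-degenerate one --- requires exactly the lightlike bookkeeping that distinguishes this new example from Chen's incomplete list.
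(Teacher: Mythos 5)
Your proposal is correct, and it reaches the classification by a route whose key engine differs from the paper's. Both proofs share the first step: compose with the inclusion $\iota$ and invoke Lemma~\ref{umb_umb}(3) so that $f=\iota\circ\phi$ is a totally umbilical immersion into $\mathbb{E}^{n+1}_{p}$. From there the paper does \emph{not} quote the Euclidean classification at all: it computes the first normal space $N^{1}=\textrm{Span}\{\tilde{H}\}$, notes that the mean curvature is parallel by Codazzi, and applies the Erbacher--Magid reduction theorem (Lemma~\ref{reduction}) to trap $f(M)$ inside an $(m+1)$-dimensional totally geodesic affine subspace $E^{\ast}$, congruent to $\mathbb{E}^{m+1}_{s}$, $\mathbb{E}^{m+1}_{s+1}$, or the degenerate plane $\Pi^{m+1}_{s,m-s,1}$; the theorem then follows by enumerating the admissible translations of $E^{\ast}$ (spacelike $v_{S}$, timelike $v_{T}$, lightlike $v_{L}$, and $N$) up to isometry of the sphere and computing the intersections $\mathbb{S}^{n}_{p}(1)\cap E^{\ast}$. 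You instead take Theorem~\ref{tot_umb_Euc} as a black box and solve the placement problem $\langle f,f\rangle_{p}\equiv 1$ for each Euclidean model; your enumeration by the causal type of the centre $c$ (with $c\perp V$ and $\langle c,c\rangle_{p}=1-\rho^{2}$) is exactly dual to the paper's enumeration of translation vectors, and Witt's extension theorem is what guarantees, as you assert, that this causal type is the sole remaining invariant up to isometries of the sphere. Your dictionary $\langle\tilde{H},\tilde{H}\rangle_{p}=\bar{g}(H,H)+1$ is a genuine addition the paper leaves implicit: it explains at a glance the thresholds $0$ and $-1$ that organize items (3)--(7). What the paper's route buys, and yours does not, is the by-product material of Section~4: when $E^{\ast}$ is the degenerate plane, the intersection $\mathbb{S}^{n}_{p}(1)\cap E^{\ast}$ may itself be a degenerate submanifold, and these intersections are precisely the totally umbilical lightlike submanifolds of Propositions~\ref{lightlike1} and~\ref{lightlike2}; your route filters everything through Theorem~\ref{tot_umb_Euc}, which presupposes a non-degenerate induced metric, so it never sees them --- harmless for Theorem~\ref{main1} itself, but it loses the wider harvest, and it outsources to the cited Euclidean classification the same reduction-theorem argument the paper carries out by hand.
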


\begin{thm} \rm \label{main2}
If $\phi : M^{m}_{s} \rightarrow \mathbb{H}^{n}_{p}(-1)$ is a full totally umbilical isometric immersion, $\bar{g}$ is the metric of $\mathbb{S}^{n}_{p}(1)$, and $H$ is its mean curvature vector field, 
then, up to isometry, the image is congruent to an open portion of one of the followings: 
\begin{itemize}
\item[(1)] $\mathbb{H}^{m}_{s}(-1) \rightarrow \mathbb{H}^{m+1}_{s}(-1) \subset \mathbb{H}^{n}_{p}(-1) \ ; \ x \mapsto (x,0)$ \ (totally geodesic, $H=0$); 
\item[(2)] $\mathbb{H}^{m}_{s}(-1) \rightarrow \mathbb{H}^{m+1}_{s+1}(-1) \subset \mathbb{H}^{n}_{p}(-1) \ ; \  x \mapsto (0,x)$ \ (totally geodesic, $H=0$); 
\item[(3)] $\mathbb{H}^{m}_{s}\left(-r^2\right) \rightarrow \mathbb{H}^{m+1}_{s+1}(-1) \subset \mathbb{H}^{n}_{p}(-1) \ ; \  x \mapsto (\sqrt{1-r^2}, x)$ \ ($0<r<1, \ \bar{g}(H,H) < 0$); 
\item[(4)] $\mathbb{H}^{m}_{s}\left(-r^2\right) \rightarrow \mathbb{H}^{m+1}_{s}(-1) \subset \mathbb{H}^{n}_{p}(-1) \ ; \  x \mapsto (x, \sqrt{r^2-1})$ \ ($r>1, \ 0 < \bar{g}(H,H) < 1$); 
\item[(5)] $\mathbb{H}^{m}_{s}(-1) \rightarrow \mathbb{H}^{m+2}_{s+1}(-1) \subset \mathbb{H}^{n}_{p}(-1) \ ; \  x \mapsto (1,x,1)$ \ ($H \neq 0, \ \bar{g}(H,H) = 0$) ; 
\item[(6)] $\mathbb{S}^{m}_{s}\left(r^2\right) \rightarrow \mathbb{H}^{m+1}_{s}(-1) \subset \mathbb{H}^{n}_{p}(-1) \ ; \  x \mapsto (\sqrt{1+r^2}, x)$ \ ($r>0, \ \bar{g}(H,H) > 1$); 
\item[(7)] $\mathbb{E}^{m}_{s} \rightarrow \mathbb{H}^{m+1}_{s}(-1) \subset \mathbb{H}^{n}_{p}(-1) \ ; \  x \mapsto \left(\langle x, x \rangle_{s}+\dfrac{5}{4},x,\langle x, x \rangle_{s}+\dfrac{3}{4}\right)$ \ ($\bar{g}(H,H) = 1$). 
\end{itemize}
Moreover, when $M^{m}_{s}$ is geodesically complete, the image globally coincides with one of the above list (1)--(7). 
\end{thm}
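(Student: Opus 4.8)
The plan is to reduce everything to the flat classification of Theorem~\ref{tot_umb_Euc} by composing with the canonical inclusion, exactly paralleling the argument for Theorem~\ref{main1}. Write $\iota : \mathbb{H}^{n}_{p}(-1) \to \mathbb{E}^{n+1}_{p+1}$ for the inclusion and set $f = \iota \circ \phi$. By Lemma~\ref{umb_umb}(3), $f$ is a totally umbilical isometric immersion into the flat space $\mathbb{E}^{n+1}_{p+1}$, so the Ahn--Kim--Kim/Magid list (Theorem~\ref{tot_umb_Euc}) applies to $f$. The only extra input needed to run the reduction is the relation between the two mean curvature vector fields, together with fullness to recover the ambient hyperboloid from the flat model.

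First I would compute the second fundamental form of $\iota$. Since the position vector field $f$ satisfies $\langle f, f\rangle_{p+1} = -1$ and $\bar{\nabla}_{X} f = d\phi(X)$, differentiating $\langle Y, f\rangle_{p+1} = 0$ shows that $\iota$ is totally umbilical with mean curvature vector equal to the timelike field $f$. Hence the second fundamental forms add, giving
\begin{equation*}
H_{f} = H + f, \qquad \langle H_{f}, H_{f}\rangle_{p+1} = \bar{g}(H,H) - 1,
\end{equation*}
where the last identity uses $\langle H, f\rangle_{p+1} = 0$ because $H$ is tangent to $\mathbb{H}^{n}_{p}(-1)$. This single scalar is what organizes the whole classification: applying Theorem~\ref{tot_umb_Euc} to $f$ splits the problem according to the sign of $\bar{g}(H,H) - 1$. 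When $\bar{g}(H,H) > 1$ the vector $H_{f}$ is spacelike and $f(M)$ is an open portion of a pseudo-sphere; when $\bar{g}(H,H) < 1$ it is timelike and $f(M)$ is an open portion of a pseudo-hyperbolic space; and when $\bar{g}(H,H) = 1$ one checks $H_{f}\neq 0$ is lightlike (otherwise $H=-f$ would force $1=-1$), so $f(M)$ is an open portion of the flat marginally trapped model $\mathbb{U}^{m}_{s}$.

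In each branch I would re-impose the relation $\langle f, f\rangle_{p+1} = -1$ and invoke fullness of $\phi$ together with the Erbacher--Magid reduction (Lemma~\ref{reduction}) to pin down the minimal totally geodesic subspace containing $f(M)$, which fixes the ambient space form and yields the explicit normal forms. The spacelike branch produces case~(6) and the flat branch produces case~(7). The timelike branch $\bar{g}(H,H) < 1$ is where cases (1)--(5) all live, since there $f(M)$ is an open portion of a single pseudo-hyperbolic space $\mathbb{H}^{m}_{s}(-\rho^{2})$ with $\rho$ equal to the intrinsic radius; these are separated by the causal character of the center $c$ of that hyperbolic space as a vector of $\mathbb{E}^{n+1}_{p+1}$. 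The key computation is that, in standard position, $\langle c, c\rangle_{p+1} = \rho^{2}-1$ while $\langle H_{f}, H_{f}\rangle_{p+1} = -1/\rho^{2}$, so $\bar{g}(H,H)$ is a positive multiple of $\langle c, c\rangle_{p+1}$ and the causal type of $c$ is detected by the sign of $\bar{g}(H,H)$: a vanishing $c$ gives the totally geodesic cases (1),(2) with $H=0$, a timelike $c$ gives (3), a spacelike $c$ gives (4), and a lightlike $c$ gives (5). The residual ambiguity (the splitting of (1) from (2), and the index shift $s$ versus $s+1$ in the target) is resolved by whether the extra normal direction needed to fill out $\mathbb{M}^{n}_{p}(-1)$ is spacelike or timelike, normalized by fullness.

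I expect the main obstacle to be the lightlike subcase, i.e.\ case~(5), where $\bar{g}(H,H)=0$ with $H\neq 0$. Here $f(M)$ still lies in an ordinary pseudo-hyperbolic space, but its center $c$ is a nonzero null vector of $\mathbb{E}^{n+1}_{p+1}$, so the affine subspace carrying the hyperbolic space meets the lightcone and the normal bundle of $\phi$ acquires a degenerate direction. The delicate points are to show that fullness forces the codimension to be exactly two and the co-index exactly one, to build the explicit null frame realizing the model $x \mapsto (1,x,1)$, and to verify that this immersion is genuinely not congruent to any member of the remaining cases. Finally, for the completeness assertion I would use that a geodesically complete $M^{m}_{s}$ forces $f(M)$ to be all of the corresponding complete model (sphere, hyperbolic space, or $\mathbb{U}^{m}_{s}$) rather than merely an open portion, upgrading the conclusion from congruence to an open portion to a global isometry onto one of (1)--(7).
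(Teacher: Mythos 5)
Your proposal is correct, but it takes a genuinely different route from the paper's. The paper writes out only the pseudo-sphere case (declaring the hyperbolic case parallel), and it shares your first step --- passing to $f=\iota\circ\phi$ via Lemma~\ref{umb_umb}(3) --- but after that it never invokes Theorem~\ref{tot_umb_Euc}. Instead it computes the first normal space $N^{1}=\mathrm{Span}\{\tilde{H}\}$, notes it is parallel by Codazzi, and applies the Erbacher--Magid reduction (Lemma~\ref{reduction}) to trap $f(M)$ inside an $(m+1)$-dimensional complete, possibly \emph{lightlike}, totally geodesic subspace $E^{\ast}$ of the flat ambient space; $E^{\ast}$ is then normalized, using isometries of the quadric, to one of $\mathbb{E}^{m+1}_{\bar{s}}$ or the degenerate plane $\Pi^{m+1}_{s,m-s,1}$ translated by a spacelike, timelike or lightlike vector $v_{S},v_{T},v_{L}$, and the list (1)--(7) is read off by intersecting each normalized $E^{\ast}$ with the quadric. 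Your engine is instead the flat classification of Theorem~\ref{tot_umb_Euc} applied to $f$, organized by the identity $\langle H_{f},H_{f}\rangle_{p+1}=\bar{g}(H,H)-1$ and then by the causal character of the center $c$ with $\langle c,c\rangle_{p+1}=\rho^{2}-1$; these computations are correct (for the hyperboloid the inclusion indeed has mean curvature $+f$, and your bookkeeping reproduces exactly the $\bar{g}(H,H)$-ranges attached to items (1)--(7)), and your identification of the missing Chen case (5) with a nonzero lightlike center is the same phenomenon the paper captures via the lightlike translation $v_{L}$ of a non-degenerate plane. What each approach buys: yours is shorter and makes the curvature invariants in the statement drop out automatically, at the price of taking Theorem~\ref{tot_umb_Euc} as a black box and of a nontrivial (but, as you note, manageable) positioning argument in each branch --- in particular in the flat branch one must check that any admissible placement of $\mathbb{U}^{m}_{s}$ with a component orthogonal to its span destroys fullness, so only case (7) survives; the paper's intersection-based route is heavier but yields, as by-products, precisely the totally umbilical lightlike submanifolds of Propositions~\ref{lightlike1} and \ref{lightlike2}, which arise from the intersections your argument discards as degenerate --- this is why the author structures the proof that way.
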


\section{Proof of main results}
Since the argument is parallel, we only give a proof in the case of pseudo-spheres. 
We assume that 
$\phi : M^{m}_{s} \rightarrow \mathbb{S}^{n}_{p}(1)$ is a totally umbilical isometric immersion, and $\iota : \mathbb{S}^{n}_{p}(1) \hookrightarrow \mathbb{E}^{n+1}_{p}$ is the inclusion. 
Then, $f := \iota \circ \phi : M^{m}_{s} \rightarrow \mathbb{E}^{n+1}_{p}$ is totally umbilical because of Lemma~\ref{umb_umb} (3). 
When we set $\tilde{h}$ and $\tilde{H}$ as the second fundamental form and the mean curvature vector field of $f$, respectively, we have, for any $X, Y \in \Gamma(TM)$, 
\begin{equation*} 
\tilde{h}(X, Y) = \langle X, Y \rangle_{p} \tilde{H}. 
\end{equation*}
In other words, when we set $\tilde{A}$ as the shape operator of $f$, we have 
\begin{equation*} 
\tilde{A}_{\xi}(X) = \langle \xi,\tilde{H} \rangle_{p}X 
\end{equation*}
for any $\xi \in \Gamma(T^{\bot}M), X \in \Gamma(TM)$. 
Therefore, we have 
\begin{equation*}
N^{1} = \textrm{Span} \{ \tilde{H} \}.
\end{equation*} 
In addition, if $\phi$ is totally umbilical, then $\phi$ has parallel mean curvature from the Codazzi equation. 
By using Lemma~\ref{reduction}, 
there exists an $(m+1)$-dimensional complete totally geodesic submanifold $E^{\ast} \subset \mathbb{E}^{n+1}_{p}$ such that $f(M) \subset E^{\ast}$. 
We see that $\phi(M) \subset \mathbb{S}^{n}_{p}(1) \cap E^{\ast}$. 
The subspace $E^{\ast}$ is congruent to one of $\mathbb{E}^{m+1}_{s}, \mathbb{E}^{m+1}_{s+1}, \Pi^{m+1}_{s,m-s,1}$. 
Thus, it suffices to check these three cases of $E^{\ast}$. 
Cohesively we use the formal notation $\bar{s} \in \{s, s+1\}$. 
Then, we have only to check the two possibilities $\mathbb{E}^{m+1}_{\bar{s}}$ and $\Pi^{m+1}_{s,m-s,1}$. 

Here, if we should take translations of $E^{\ast}$ into account, 
the direction of translations has to be transverse to $E^{\ast}$. 
When $E^{\ast}$ is congruent to $\mathbb{E}^{m+1}_{\bar{s}}$, i.e. in the non-degenerate case, taking a vector $v \in (\mathbb{E}^{m+1}_{\bar{s}})^{\bot}$, we have only to consider $E^{\ast} = \mathbb{E}^{m+1}_{\bar{s}} + v$. 
When $E^{\ast}$ is congruent to $\Pi^{m+1}_{s,m-s,1}$, i.e. in the degenerate case, 
$(\Pi^{m+1}_{\bar{s},t,1})^{\bot}$ is no longer a complementary of $\Pi^{m+1}_{\bar{s},t,1}$. 
From the viewpoint of lightlike geometry \cite{BD}, 
by regarding $\Pi^{m+1}_{s,m-s,1}$ as a $1$-lightlike submanifold in $\mathbb{E}^{n}_{p}$, 
we have decompositions 
\begin{align*}
\Pi^{m+1}_{s,m-s,1} &= \textrm{Rad}\Pi^{m+1}_{s,m-s,1} \oplus \mathbb{E}^{m}_{s}, \\
(\Pi^{m+1}_{s,m-s,1})^{\bot} &= \textrm{tr}\Pi^{m+1}_{s,m-s,1} \oplus \mathbb{E}^{n-m}_{p-s-1}, \\
\mathbb{E}^{n+1}_{p} &= \Pi^{m+1}_{s,m-s,1} \oplus \mathbb{E}^{n-m}_{p-s-1} \oplus \textrm{tr}\Pi^{m+1}_{s,m-s,1}, 
\end{align*}
where we define 
\begin{align*}
\textrm{Rad}\Pi^{m+1}_{s,m-s,1} &:= \textrm{Span}_{\mathbb{R}}\{\xi:= (1, 0,  \cdots, 0, 1) \in \Pi^{m+1}_{s,m-s,1}\},\\ 
\textrm{tr}\Pi^{m+1}_{s,m-s,1} &:= \textrm{Span}_{\mathbb{R}}\left\{N:=\frac{1}{2} (-1, 0, \cdots, 0, 1) \in (\Pi^{m+1}_{s,m-s,1})^{\bot} \right\}
\end{align*}
and $\xi, N$ satisfy $\langle \xi, \xi \rangle_{p} = \langle N, N \rangle_{p}=0, \langle \xi, N \rangle_{p}=1$. In the case, 
taking a vector $v \in \mathbb{E}^{n-m}_{p-s-1} \oplus \textrm{tr}\Pi^{m+1}_{s,m-s,1}$, 
we have only to consider $E^{\ast} = \Pi^{m+1}_{s,m-s,1} + v$. 

Under the above preparation and by isometry of $\mathbb{S}^{n}_{p}(1)$, we consider sequences of subspaces of $\mathbb{E}^{n+1}_{p}$;  
\begin{equation}
  \begin{array}{ccccccc}
     {\empty} & {\empty} & \mathbb{E}^{m+1}_{\bar{s}}+v_{S} & {\subset} & \mathbb{E}^{m+2}_{\bar{s}} & \rotatebox{-30}{$\subset$} & {\empty} \\
     {\empty} & \rotatebox{30}{$=$} & {\empty} & {\empty} & {\empty} & {\empty} & {\empty} \\ 
     E^{\ast} & {=} & \mathbb{E}^{m+1}_{\bar{s}}+v_{T} & {\subset} & \mathbb{E}^{m+2}_{\bar{s}+1} & {\subset} & \mathbb{E}^{n+1}_{p}, \\
     {\empty} & \rotatebox{-30}{$=$} & {\empty} & {\empty} & {\empty} & {\empty} & {\empty} \\
     {\empty} & {\empty} & \mathbb{E}^{m+1}_{\bar{s}}+v_{L} & {\subset} & \mathbb{E}^{m+3}_{\bar{s}+1} & \rotatebox{30}{$\subset$} & {\empty}  \label{nondeg_case}
  \end{array}
\end{equation}
\begin{equation}
  \begin{array}{ccccccc} 
     {\empty} & {\empty} & \Pi^{m+1}_{s,m-s,1}+v_{S} & {\subset} & \mathbb{E}^{m+3}_{s+1} & {\empty} & {\empty} \\
     {\empty} & \rotatebox{50}{$=$} &  {\empty} & {\empty} & {\empty} & \rotatebox{-50}{$\subset$} & {\empty} \\
     {\empty} & \rotatebox{15}{$=$} & \Pi^{m+1}_{s,m-s,1}+v_{T} & {\subset} & \mathbb{E}^{m+3}_{s+2} & \rotatebox{-30}{$\subset$} & {\empty} \\
     E^{\ast} & {\empty} & {\empty} & {\empty} & {\empty} & {\empty} & \mathbb{E}^{n+1}_{p}, \\
     {\empty} & \rotatebox{-15}{$=$} & \Pi^{m+1}_{s,m-s,1}+v_{L} & {\subset} & \mathbb{E}^{m+4}_{s+2} & \rotatebox{30}{$\subset$} & {\empty} \\
     {\empty} & \rotatebox{-50}{$=$} & {\empty} & {\empty} & {\empty} & \rotatebox{50}{$\subset$} & {\empty} \\
     {\empty} & {\empty} & \Pi^{m+1}_{s,m-s,1}+N & {\subset} & \mathbb{E}^{m+2}_{s+1} & {\empty} & {\empty} \label{deg_case}
  \end{array}
\end{equation}
where, by isometry of $\mathbb{S}^{n}_{p}(1)$, $v_{S}, v_{T}$ and $v_{L}$ are given by
\begin{align}
v_{S} &=  (\underbrace{0, \cdots, 0}_{\bar{s}}, 0, \cdots, 0, \rho) \in \mathbb{E}^{m+2}_{\bar{s}} \quad (\rho \geq 0), \label{First} \\
v_{T} &=  (\underbrace{\rho, 0, \cdots, 0}_{\bar{s}+1}, 0, \cdots, 0) \in \mathbb{E}^{m+2}_{\bar{s}+1} \quad (\rho \geq 0), \\
v_{L} &=  (\underbrace{1, 0, \cdots, 0}_{\bar{s}+1}, 0, \cdots, 0, 1) \in \mathbb{E}^{m+3}_{\bar{s}+1} 
\end{align}
in the non-degenerate case (\ref{nondeg_case}), 
\begin{align}
v_{S} &=  (0, \underbrace{0, \cdots, 0}_{s}, 0, \cdots, 0, \rho, 0) \in \mathbb{E}^{m+3}_{s+1} \quad (\rho \geq 0), \\
v_{T} &=  (0, \underbrace{\rho, 0, \cdots, 0}_{s+1}, 0, \cdots, 0, 0) \in \mathbb{E}^{m+3}_{s+2} \quad (\rho \geq 0), \\
v_{L} &=  (0, \underbrace{1, 0, \cdots, 0}_{s+1}, 0, \cdots, 0, 1, 0) \in \mathbb{E}^{m+4}_{s+2} \label{Last}
\end{align}
in the degenerate case (\ref{deg_case}). 
The proof is completed by checking the intersection of $\mathbb{S}^{n}_{p}(1)$ and $E^{\ast}$ in each case (\ref{First})--(\ref{Last}). \qed

\section{Observations and Applications}
\subsection{Observation $1$ : Riemannian or Lorentzian cases}
We first restore the classification of totally umbilical submanifolds in spheres and hyperbolic spaces, i.e. Riemannian case $p=0$.
\begin{itemize}
\item Totally umbilical submanifolds of $\mathbb{S}^{n}(1)$:
\begin{itemize}
\item[(1)] $\mathbb{S}^{m}(1) \rightarrow \mathbb{S}^{m+1}(1)$ ; $x \mapsto (x,0)$ \quad (totally geodesic);
\item[(2)] $\mathbb{S}^{m}\left(r^2\right) \rightarrow \mathbb{S}^{m+1}(1)$ ; $x \mapsto (x,\sqrt{1-r^2})$ \quad ($0<r<1$).
\end{itemize}
\item Totally umbilical submanifolds of $\mathbb{H}^{n}(-1)$: 
\begin{itemize}
\item[(1)] $\mathbb{H}^{m}(-1) \rightarrow \mathbb{H}^{m+1}(-1) \ ; \ x \mapsto (x,0)$ \quad (totally geodesic); 
\item[(2)] $\mathbb{H}^{m}\left(-r^2\right) \rightarrow \mathbb{H}^{m+1}(-1) \ ; \  x \mapsto (x, \sqrt{r^2-1}) \quad (r>1)$; 
\item[(3)] $\mathbb{S}^{m}\left(r^2\right) \rightarrow \mathbb{H}^{m+1}(-1) \ ; \  x \mapsto (\sqrt{1+r^2}, x) \quad (r>0)$; 
\item[(4)] $\mathbb{E}^{m} \rightarrow \mathbb{H}^{m+1}(-1) \ ; \  x \mapsto \left(||x||^{2}+\dfrac{5}{4},x,||x||^{2}+\dfrac{3}{4}\right)$. 
\end{itemize}
\end{itemize}
In de~Sitter and anti-de~Sitter spacetimes, i.e. Lorentzian case $p=1$, we obtain the followings:
\begin{itemize}
\item Totally umbilical submanifolds of $d\mathbb{S}^{n}(1)$: 
\begin{itemize}
\item[(1)] $d\mathbb{S}^{m}(1) \rightarrow d\mathbb{S}^{m+1}(1)$ ; $x \mapsto (x,0)$ \quad (totally geodesic); 
\item[(2)] $\mathbb{S}^{m}(1) \rightarrow d\mathbb{S}^{m+1}(1)$ ; $x \mapsto (0,x)$ \quad (totally geodesic); 
\item[(3)] $d\mathbb{S}^{m}\left(r^2\right) \rightarrow d\mathbb{S}^{m+1}(1)$ ; $x \mapsto (x,\sqrt{1-r^2})$ \quad ($0<r<1$); 
\item[(4)] $\mathbb{S}^{m}\left(r^2\right) \rightarrow d\mathbb{S}^{m+1}(1)$ ; $x \mapsto (\sqrt{r^2 - 1},x)$ \quad ($r>1$); 
\item[(5)] $\mathbb{S}^{m}(1) \rightarrow d\mathbb{S}^{m+2}(1)$ ; $x \mapsto (1,x,1)$; 
\item[(6)] $\mathbb{H}^{m}\left(-r^2\right) \rightarrow d\mathbb{S}^{m+1}(1)$ ; $x \mapsto (x,\sqrt{1+r^{2}})$ \quad ($r>0$); 
\item[(7)] $\mathbb{E}^{m} \rightarrow d\mathbb{S}^{m+1}(1) \ ; \ x \mapsto \left(||x||^{2}-\dfrac{3}{4},x,||x||^{2}-\dfrac{5}{4}\right)$. 
\end{itemize}
\item Totally umbilical submanifolds of $Ad\mathbb{S}^{n}(-1)$: 
\begin{itemize}

\item[(1)] $\mathbb{H}^{m}(-1) \rightarrow Ad\mathbb{S}^{m+1}(-1) \ ; \ x \mapsto (x,0)$ \quad (totally geodesic); 
\item[(2)] $Ad\mathbb{S}^{m}(-1) \rightarrow Ad\mathbb{S}^{m+1}(-1) \ ; \  x \mapsto (0,x)$ \quad (totally geodesic); 
\item[(3)] $\mathbb{H}^{m}\left(-r^2\right) \rightarrow Ad\mathbb{S}^{m+1}(-1) \ ; \  x \mapsto (\sqrt{1-r^2}, x) \quad (0<r<1)$; 
\item[(4)] $Ad\mathbb{S}^{m}(-r^2) \rightarrow Ad\mathbb{S}^{m+1}(-1) \ ; \  x \mapsto (x, \sqrt{r^2-1}) \quad (r>1)$; 
\item[(5)] $\mathbb{H}^{m}(-1) \rightarrow Ad\mathbb{S}^{m+2}(-1) \ ; \  x \mapsto (1,x,1)$; 
\item[(6)] $d\mathbb{S}^{m}\left(r^2\right) \rightarrow Ad\mathbb{S}^{m+1}(-1) \ ; \  x \mapsto (\sqrt{1+r^2}, x) \quad (r>0)$; 
\item[(7)] $\mathbb{L}^{m} \rightarrow Ad\mathbb{S}^{m+1}(-1) \ ; \  x \mapsto \left(\langle x, x \rangle_{1}+\dfrac{5}{4},x,\langle x, x \rangle_{1}+\dfrac{3}{4}\right)$. 
\end{itemize}
\end{itemize}
Remark that $||\cdot||$ is the canonical Euclidean norm of $\mathbb{E}^{m}$, i.e. $||x||^2:=\langle x, x \rangle_{0}$. 

\subsection{Observation $2$ : Totally umbilical lightlike submanifolds}
As a by-product of the proof of main results, 
we obtain the following lightlike submanifolds in non-flat space forms. 

\begin{prop} \rm \label{lightlike1}
Let $m \geq 2$. 
The followings are full totally umbilical lightlike submanifolds in a pseudo-sphere $\mathbb{S}^{n}_{p}(1)$. 
\begin{itemize}
\item[(1)] $\mathbb{S}^{m-1}_{s}(1) \times \mathbb{E}^{0,0,1} \rightarrow \mathbb{S}^{m+1}_{s+1}(1)$ ; $(x,t) \mapsto (t,x,t)$ \quad (totally geodesic),
\item[(2)] $\mathbb{S}^{m-1}_{s}(r^2) \times \mathbb{E}^{0,0,1} \rightarrow \mathbb{S}^{m+2}_{s+1}(1)$ ; $(x,t) \mapsto (t,x,\sqrt{1-r^{2}},t)$ \quad ($0<r<1$),
\item[(3)] $\mathbb{S}^{m-1}_{s}(r^2) \times \mathbb{E}^{0,0,1} \rightarrow \mathbb{S}^{m+2}_{s+2}(1)$ ; $(x,t) \mapsto (t,\sqrt{r^{2}-1},x,t)$ \quad ($r>1$),

\item[(4)] $\mathbb{H}^{m-1}_{s}(-r^2) \times \mathbb{E}^{0,0,1} \rightarrow \mathbb{S}^{m+2}_{s+1}(1)$ ; $(x,t) \mapsto (t,x,\sqrt{1+r^{2}},t)$ \quad ($r>0$),

\item[(5)] $\Lambda^{m}_{s} \rightarrow \mathbb{S}^{m+1}_{s+1}(1)$ ; $x \mapsto (x,1)$,
\item[(6)] $\Lambda^{m}_{s} \times \mathbb{E}^{0,0,1} \rightarrow \mathbb{S}^{m+2}_{s+2}(1)$ ; $(x,t) \mapsto (t,x,1,t)$,
\item[(7)] $\mathbb{S}^{m-1}_{s}(1) \times \mathbb{E}^{0,0,1} \rightarrow \mathbb{S}^{m+3}_{s+2}(1)$ ; $(x,t) \mapsto (t,1,x,1,t)$.
\end{itemize}
We remark that the above (6) is $2$-lightlike, and others are $1$-lightlike. 
\end{prop}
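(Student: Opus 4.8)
The plan is to verify, for each of the seven maps $\phi$, the four required properties in turn: that the image lies in the stated pseudo-sphere, that the induced metric is degenerate of the claimed radical rank, that $\phi$ is totally umbilical in the lightlike sense, and that $\phi$ is full. The first point is an immediate computation: evaluating $\langle \phi(p),\phi(p)\rangle_{p}$ in the ambient $\mathbb{E}^{n+1}_{p}$, the contributions of the repeated null entry (the first and last coordinates, both equal to $t$) cancel, leaving $\langle x,x\rangle$, which equals $1$, $r^{2}$, $-r^{2}$, or $0$ exactly as dictated by the factor $\mathbb{S}^{m-1}_{s}(r^{2})$, $\mathbb{H}^{m-1}_{s}(-r^{2})$, or $\Lambda^{m}_{s}$.

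For the induced metric I would push forward the coordinate fields. The $\mathbb{E}^{0,0,1}$-factor $\partial_{t}$ maps to the null vector $\xi=(1,0,\dots,0,1)$, which satisfies $\langle\xi,\xi\rangle_{p}=0$ and is orthogonal to every other tangent vector, while the remaining directions reproduce the pseudo-spherical, hyperbolic, or lightcone metric. This exhibits $\operatorname{Rad}TM=\operatorname{Span}\{\xi\}$, so $\phi$ is $1$-lightlike in cases (1)--(5) and (7). In case (6) the domain additionally carries the radical direction of the lightcone $\Lambda^{m}_{s}$, whose image is a second independent null vector orthogonal to $TM$, so that example is $2$-lightlike.

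The heart of the argument is umbilicity, and here I would reduce to the flat ambient space exactly as in the proof of the main theorems. Writing $f=\iota\circ\phi:M\to\mathbb{E}^{n+1}_{p}$ and choosing the transversal bundle $\operatorname{tr}_{\mathbb{E}}(TM)=\operatorname{tr}_{\mathbb{S}}(TM)\oplus\mathbb{R}f$, the Gauss formula for the unit pseudo-sphere reads $\bar\nabla^{\mathbb{E}}_{X}Y=\nabla^{\mathbb{S}}_{X}Y-g(X,Y)f$, which yields the relation $\tilde B(X,Y)=B(X,Y)-g(X,Y)f$ between the lightlike second fundamental forms; this is the degenerate analogue of Lemma~\ref{umb_umb}(3), proved by the same decomposition. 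Since $\bar\nabla^{\mathbb{E}}$ is ordinary differentiation, I would compute $\tilde B$ directly from the explicit parametrizations: the null coordinate $\partial_{t}$ contributes nothing (its second derivatives vanish), while the pseudo-spherical, hyperbolic, or lightcone directions contribute only a term proportional to $g(X,Y)$ coming from the umbilicity of the factor $\mathbb{S}^{m-1}_{s}$, $\mathbb{H}^{m-1}_{s}$, or $\Lambda^{m}_{s}$ in its own ambient space. Thus $\tilde B$ is proportional to $g$, hence so is $B$, and $\phi$ is totally umbilical; reading off whether the resulting $\mathcal{H}\in\operatorname{tr}_{\mathbb{S}}(TM)$ vanishes distinguishes the totally geodesic example (1) from the properly umbilical ones.

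Finally, fullness is a linear-algebra check: if $f(M)$ were contained in a non-degenerate totally geodesic hypersurface $\mathbb{S}^{n}_{p}(1)\cap W$ with $W=u^{\bot}$ and $\langle u,u\rangle_{p}\neq 0$, then $\langle f(p),u\rangle_{p}\equiv 0$. Varying $t$ forces $u\perp\xi$, and varying the pseudo-spherical or lightcone factor forces the corresponding block of $u$ to vanish; the surviving constraints force $u$ into the null line $\mathbb{R}\xi$ (in case (6), the null plane spanned by the two radical directions) or to vanish outright, so $\langle u,u\rangle_{p}=0$, contradicting non-degeneracy of $W$. I expect the main obstacle to be the bookkeeping intrinsic to lightlike geometry: because the decomposition $\phi^{\ast}T\bar M=TM\oplus\operatorname{tr}(TM)$ is not orthogonal, one must be careful that the null image $\xi$ of the $\mathbb{E}^{0,0,1}$-factor is recorded as tangential rather than transversal---otherwise a spurious term $g(X,Y)\,t\,\xi$ appears in $B$ and falsely obstructs total geodesy in (1)---and the simultaneous presence of two null tangent directions in the $2$-lightlike case (6) requires choosing the screen and transversal bundles with some care.
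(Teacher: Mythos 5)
Your proposal is correct, and it takes a different route from the paper --- necessarily so, since the paper gives no standalone proof of this proposition: the seven maps are presented there as by-products of the proof of Theorems \ref{main1} and \ref{main2}, namely as the intersections $\mathbb{S}^{n}_{p}(1)\cap E^{\ast}$ arising in the degenerate branch (\ref{deg_case}) of the case analysis, where $E^{\ast}=\Pi^{m+1}_{s,m-s,1}+v$ is a translated canonical lightlike plane; their umbilicity is left implicit, the only indication of the intended argument being the composition reasoning spelled out later for $\mathcal{S}^{m+1}$ (a totally geodesic or umbilical lightlike inclusion followed by a non-degenerate totally umbilical hypersurface inclusion is totally umbilical). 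You instead verify each defining property directly: containment in the quadric, the radical rank from the pushforward of $\partial_{t}$ (plus the position field of the lightcone in (5)--(6)), umbilicity through the degenerate analogue of Lemma \ref{umb_umb}(3) with the explicit splitting $\textrm{tr}_{\mathbb{E}}(TM)=\textrm{tr}_{\mathbb{S}}(TM)\oplus\mathbb{R}f$ followed by differentiation of the parametrizations, and fullness by showing that any linear hyperplane $u^{\perp}$ containing the image forces $u$ into a totally isotropic subspace, contradicting $\langle u,u\rangle_{p}\neq 0$. Your approach buys rigor exactly where the paper is silent, and you correctly isolate the two genuine pitfalls of the lightlike setting: the term $t\,g(X,Y)\,\xi$ must be recognized as tangential (otherwise example (1) would falsely appear non-geodesic), and umbilicity must be certified relative to a chosen transversal bundle, which is legitimate because the notion is independent of that choice. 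What the paper's implicit derivation buys in exchange is provenance: it exhibits the list as precisely the degenerate intersections discarded from the non-degenerate classification, which makes the origin of the enumeration transparent even though it certifies none of the stated properties.
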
 

\begin{prop} \rm \label{lightlike2}
Let $m \geq 2$. 
The followings are full totally umbilical lightlike submanifolds in a pseudo-hyperbolic space $\mathbb{H}^{n}_{p}(-1)$.  
\begin{itemize}
\item[(1)] $\mathbb{H}^{m-1}_{s}(-1) \times \mathbb{E}^{0,0,1} \rightarrow \mathbb{H}^{m+1}_{s+1}(-1)$ ; $(x,t) \mapsto (t,x,t)$ \quad (totally geodesic),
\item[(2)] $\mathbb{H}^{m-1}_{s}(-r^2) \times \mathbb{E}^{0,0,1} \rightarrow \mathbb{H}^{m+2}_{s+1}(-1)$ ; $(x,t) \mapsto (t,\sqrt{1-r^{2}},x,t)$ \quad ($0<r<1$), 
\item[(3)] $\mathbb{H}^{m-1}_{s}(-r^2) \times \mathbb{E}^{0,0,1} \rightarrow \mathbb{H}^{m+2}_{s+2}(-1)$ ; $(x,t) \mapsto (t,x,\sqrt{r^{2}-1},t)$ \quad ($r>1$), 

\item[(4)] $\mathbb{S}^{m-1}_{s}(r^2) \times \mathbb{E}^{0,0,1} \rightarrow \mathbb{H}^{m+2}_{s+1}(-1)$ ; $(x,t) \mapsto (t,\sqrt{1+r^{2}},x,t)$ \quad ($r>0$),

\item[(5)] $\Lambda^{m}_{s} \rightarrow \mathbb{H}^{m+1}_{s+1}(-1)$ ; $x \mapsto (1,x)$,
\item[(6)] $\Lambda^{m}_{s} \times \mathbb{E}^{0,0,1} \rightarrow \mathbb{H}^{m+2}_{s+2}(-1)$ ; $(x,t) \mapsto (t,1,x,t)$,
\item[(7)] $\mathbb{H}^{m-1}_{s}(-1) \times \mathbb{E}^{0,0,1} \rightarrow \mathbb{H}^{m+3}_{s+2}(-1)$ ; $(x,t) \mapsto (t,1,x,1,t)$.
\end{itemize}
We remark that the above (6) is $2$-lightlike, and others are $1$-lightlike. 
\end{prop}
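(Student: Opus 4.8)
The plan is to verify the seven maps one at a time; each is explicit and at most quadratic in its variables, so every assertion reduces to a finite computation. For a given map $\Phi$ I would establish, in order: (i) that the image lies in the stated pseudo-hyperbolic space; (ii) that the pulled-back metric is degenerate with radical of the asserted rank, so that $\Phi$ is $r$-lightlike with the claimed $r$; (iii) that the lightlike second fundamental form $B$ satisfies $B(X,Y)=g(X,Y)\mathcal{H}$ for a single transversal field $\mathcal{H}$; and (iv) that $\Phi$ is full. These examples are the degenerate counterparts of the intersections $\mathbb{H}^{n}_{p}(-1)\cap E^{\ast}$ computed in the proof of Theorem~\ref{main2}, coming from the degenerate affine subspaces $\Pi^{m+1}_{s,m-s,1}+v$ of the degenerate case~\eqref{deg_case}; the direct check below is the most transparent way to record them, and the pseudo-sphere statement (Proposition~\ref{lightlike1}) is entirely analogous.

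For (i) I would substitute $\Phi$ into the defining equation $\langle\,\cdot\,,\,\cdot\,\rangle=-1$ of the target. In the cases carrying an $\mathbb{E}^{0,0,1}$ factor the repeated entries, which have opposite causal character, cancel in pairs, and the middle block reproduces the defining equation of the factor $\mathbb{H}^{m-1}_{s}$, $\mathbb{S}^{m-1}_{s}$ or $\Lambda^{m}_{s}$; in case (5) the lightcone relation $\langle x,x\rangle=0$ together with the single timelike entry $1$ directly yields $-1$. A parallel count of the timelike coordinates confirms the index. For (ii) I would pull back the ambient metric: the vector $\partial_{t}$ produced by each $\mathbb{E}^{0,0,1}$ factor maps to a null vector orthogonal to every other tangent vector, hence lies in $\mathrm{Rad}\,TM$. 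This yields a $1$-lightlike structure in cases (1)--(4) and (7); in case (5) the degeneracy is inherited from the lightcone $\Lambda^{m}_{s}$, whose induced metric is already degenerate along the position vector; and in case (6) the two mechanisms combine into a two-dimensional radical, which is why that example is $2$-lightlike.

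For (iii) I would work inside $\mathbb{E}^{n+1}_{p+1}$. Let $D$ be its flat connection and $\bar\nabla$ the Levi--Civita connection of the target. Since the inclusion $\mathbb{H}^{n}_{p}(-1)\hookrightarrow\mathbb{E}^{n+1}_{p+1}$ is umbilical with second fundamental form $g(X,Y)F$ along the position vector $F$, one has $\bar\nabla_{X}Y=D_{X}Y-g(X,Y)F$. After fixing a transversal bundle $\mathrm{tr}(TM)$ as in~\eqref{Gauss-type} --- a screen distribution together with a null transversal section $N$ paired with the radical generator $\xi$ by $\langle\xi,N\rangle=1$ --- I would decompose $\bar\nabla_{X}Y$ into its tangential part $\check\nabla_{X}Y$ and its transversal part $B(X,Y)$. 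For the totally geodesic case (1) the transversal part vanishes identically; in the remaining cases the second derivatives of the quadratic entries, after subtracting $g(X,Y)F$, leave precisely a multiple of $g(X,Y)$, so $B(X,Y)=g(X,Y)\mathcal{H}$ with $\mathcal{H}\in\mathrm{tr}(TM)$. Because the notion of lightlike total umbilicity is independent of the choice of transversal bundle, any convenient screen and null section suffice; for case (5) I would instead transport the known total umbilicity of the lightcone along the affine embedding $x\mapsto(1,x)$.

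For (iv), fullness, I would compute the orthogonal complement of the linear span of the image. In each case this complement is spanned by null vectors, namely the radical generators, so every totally geodesic hypersurface of $\mathbb{H}^{n}_{p}(-1)$ that contains the image is degenerate; hence the image lies in no non-degenerate totally geodesic hypersurface. The main obstacle is step (iii): unlike the non-degenerate setting, Lemma~\ref{umb_umb} is unavailable, so I cannot reduce umbilicity in the space form to umbilicity of $\iota\circ\Phi$ in $\mathbb{E}^{n+1}_{p+1}$, and must argue directly from $B(X,Y)=g(X,Y)\mathcal{H}$ while handling the non-orthogonal decomposition~\eqref{Gauss-type}. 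The genuinely delicate instance is the $2$-lightlike example (6), where the two-dimensional radical forces a careful choice of screen distribution and transversal bundle before $B$ can be exhibited as a multiple of $g$.
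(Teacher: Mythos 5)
Your plan is correct in its approach, and it is worth knowing that it supplies something the paper never writes down: the paper contains \emph{no} proof of Proposition~\ref{lightlike2}. The seven immersions are recorded ``as a by-product of the proof of main results,'' i.e.\ they are read off from the intersections $\mathbb{H}^{n}_{p}(-1)\cap E^{\ast}$ that arise in the proof of Theorem~\ref{main2} and are discarded there because they are degenerate. Your closing observation thus identifies the paper's actual route, with one nuance: items (1)--(4), (6), (7) come from the degenerate subspaces $\Pi^{m+1}_{s,m-s,1}+v$ of (\ref{deg_case}), while item (5) comes from a \emph{non-degenerate} subspace of (\ref{nondeg_case}) at the borderline value $\rho=1$, where the intersection collapses onto the lightcone. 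Your direct verification is not redundant relative to that picture, because no lemma in the paper transfers total umbilicity to a degenerate intersection: as you correctly note, Lemma~\ref{umb_umb} presupposes an isometric immersion, hence a non-degenerate induced metric. So steps (ii)--(iv) genuinely require the hands-on computation in the lightlike framework, and your outline is strictly more complete than the paper's treatment.

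Three repairs to the outline. First, in step (iii) there are no ``quadratic entries'': all seven maps are affine in $(x,t)$, and the transversal part of $B$ comes from the second fundamental forms of the \emph{factors}. For $X,Y$ tangent to the factor $\mathbb{H}^{m-1}_{s}(-r^{2})$ or $\mathbb{S}^{m-1}_{s}(r^{2})$ with position vector $x$, one has $D_{X}Y=\nabla_{X}Y-\langle X,Y\rangle\, x/\langle x,x\rangle$, hence $\bar\nabla_{X}Y=\nabla_{X}Y-g(X,Y)\left(x/\langle x,x\rangle+F\right)$; the transversal component of $-\left(x/\langle x,x\rangle+F\right)$ is the required $\mathcal{H}$, and one must also record $B(\partial_{t},\cdot)=0$, which holds and is forced by $g(\partial_{t},\cdot)=0$. (As a sanity check, in case (1) this vector equals $t\,\xi$ with $\xi=(1,0,\cdots,0,1)$, which is tangent, recovering total geodesicity; the lightcone factors in (5), (6) are handled by transport, as you say.) Second, in step (iv) ``spanned by null vectors'' is not sufficient: a span of null vectors may contain non-null vectors, so you must check that the orthogonal complement of the linear span is \emph{totally} isotropic. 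It is, in every case --- in (7) it is the totally null plane spanned by the two lightlike pair-vectors, and in (5) the span is all of $\mathbb{E}^{m+2}_{s+2}$, so fullness is trivial. Third, step (i) carried out literally will not confirm the statement as printed: in item (2) the block $x$ occupies only $s$ timelike slots of $\mathbb{E}^{m+3}_{s+2}$, so membership forces $\langle x,x\rangle_{s}=-r^{2}$ rather than $\langle x,x\rangle_{s+1}=-r^{2}$; consistency requires the targets of (2) and (3) to be interchanged, namely (2) into $\mathbb{H}^{m+2}_{s+2}(-1)$ and (3) into $\mathbb{H}^{m+2}_{s+1}(-1)$. That is a typo in the proposition, not a defect of your method --- indeed it is exactly the kind of thing your verification scheme is designed to catch, and you should flag it rather than assert that the check succeeds as stated.
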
 
A classification problem of totally umbilical lightlike submanifolds in pseudo-Riemannin space forms is open. 
It is hard to classify totally umbilical submanifolds in pseudo-Riemannian space forms. 
We state one of the evidences below. 
Let $s$ be a non-negative integers. 
We consider the following example. 
\begin{align*}
E^{\ast} &:= \Pi^{m+2}_{s,m-s,2} + N \subset \mathbb{E}^{m+4}_{s+2} \\
&= \left\{\left(w - \frac{1}{2}, t, u_{1}, \cdots, u_{s}, v_{1}, \cdots, v_{t}, t, w + \frac{1}{2}\right) \ \middle| \ t, u_{i}, v_{j}, w \in \mathbb{R} \right\}, 
\end{align*}
where $N = \frac{1}{2}(-1, 0, \cdots, 0, 1) \in \mathbb{E}^{m+4}_{s+2}$. 
Then, we see that 
\begin{align*}
\mathcal{S}^{m+1} := 
\mathbb{S}^{m+3}_{s+2}(1) \cap E^{\ast} &= \left\{\left(w - \frac{1}{2}, x, w + \frac{1}{2}\right) \ \middle| \ -\left(w-\frac{1}{2}\right)^{2} + \langle x, x\rangle_{s+1} + \left(w+\frac{1}{2}\right)^{2} = 1 \right\} \\
&= \left\{\left(-\frac{1}{2}\langle x, x\rangle_{s+1}, x, 1 - \frac{1}{2}\langle x, x\rangle_{s+1} \right) \ \middle| \ x \in \Pi^{m+1}_{s,m-s,1} \right\}, 
\end{align*}
where we set $x = (t, u_{1}, \cdots, u_{s}, v_{1}, \cdots, v_{t}, t) \in \Pi^{m+1}_{s,m-s,1} \subset \mathbb{E}^{m+2}_{s+1}$. 
Moreover, it holds 
\begin{align*}
\mathcal{S}^{m+1} &\subset \mathcal{H}^{m+2} := \left\{\left(-\frac{1}{2}\langle \tilde{x}, \tilde{x} \rangle_{s+1}, \tilde{x}, 1 - \frac{1}{2}\langle \tilde{x}, \tilde{x} \rangle_{s+1} \right) \ \middle| \ \tilde{x} \in \mathbb{E}^{m+2}_{s+1} \right\} \cong \mathbb{E}^{m+2}_{s+1} \\
&\subset \mathbb{S}^{m+3}_{s+2}(1), 
\end{align*}
where we set $\tilde{x} = (u_{0}, u_{1}, \cdots, u_{s}, v_{1}, \cdots, v_{t}, v_{0}) \in \mathbb{E}^{m+2}_{s+1}$. 
Since $\Pi^{m+1}_{s,m-s,1}$ is a flat totally geodesic $1$-lightlike hypersurface in $\mathbb{E}^{m+2}_{s+1}$, and $\mathcal{H}^{m+2}$ is a flat totally umbilical hypersurface in $\mathbb{S}^{m+2}_{s+2}(1)$, we can claim that $\mathcal{S}^{m+1}$ is a flat totally umbilical $2$-lightlike submanifold in $\mathbb{S}^{m+3}_{s+2}(1)$. We remark that $\mathcal{H}^{m+2}$ is congruent to (7) in Theorem~\ref{main1}. 
The submanifold $\mathcal{S}^{m+1}$ is an example which cannot be obtained by the proof of Theorem \ref{main1} and \ref{main2}. 
If we consider the higher co-dimensional case, 
we can construct a $1$-parameter family of flat totally umbilical lightlike submanifolds 
\begin{align*}
&\mathbb{E}^{0,m,1} \rightarrow \mathbb{S}^{m+3}_{2}(1) \\ 
& \ \ \vin \qquad \qquad \vin \\
&(x, r) \mapsto \biggl(-\frac{1}{2}||x||^{2}\cos{\theta}-(r-\theta)\sin{\theta}, -\frac{1}{2}||x||^{2}\sin{\theta} + (r-\theta)\cos{\theta}, x  \\
& \qquad \qquad \qquad \frac{1}{2}(2 - ||x||^{2})\sin{\theta}+(r-\theta)\cos{\theta}, \frac{1}{2}(2 - ||x||^{2})\cos{\theta}-(r-\theta)\sin{\theta} \biggr),
\end{align*}
where $\theta \in \mathbb{R}$ and we set $(x, r) \in \mathbb{E}^{0,m,1} =  \mathbb{E}^{m} \oplus \mathbb{E}^{0,0,1}$. 
When $\theta = 0$, we obtain the case of $\mathcal{S}^{m+1}$. 
Therefore, it is expected that there are many other examples which are given by the intersection a pseudo-sphere and an affine subspace and a higher dimension in a pseudo-Euclidean space. 
On the other hand, the following result is known: 

\begin{prop}[{\cite[Proposition~5.3, Chapter~4]{BD}}] \rm
Any lightlike surface $M^{2}$ of a three-dimensional Lorentzian manifold is either totally
umbilical or totally geodesic. 
\end{prop}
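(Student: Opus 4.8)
The plan is to exploit the fact that, in a $3$-dimensional Lorentzian ambient $\bar{M}^{3}_{1}$, both the degenerate induced metric and the lightlike second fundamental form are forced, by dimension and by the index-one signature, to be supported on a single screen direction; consequently one is automatically a scalar multiple of the other. First I would pin down the rank of the radical distribution. Since $M^{2}$ is lightlike, $\textrm{Rad}TM \neq 0$, so its rank is $1$ or $2$. A rank-$2$ radical would make $TM$ a totally isotropic $2$-plane inside a Lorentzian (index $1$) tangent space, which is impossible, because the maximal totally isotropic subspace of a Lorentzian vector space has dimension equal to the index, namely $1$. Hence $\textrm{rank}\,\textrm{Rad}TM = 1$; I choose a local nowhere-zero section $\xi \in \Gamma(\textrm{Rad}TM)$ and a screen distribution $S(TM)$ of rank $1$ spanned by a spacelike field $W$ with $\bar{g}(W,W)=1$ and $\bar{g}(\xi,W)=0$. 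By the decomposition $\phi^{\ast}T\bar{M} = TM \oplus \textrm{tr}(TM)$, the bundle $\textrm{tr}(TM)$ has rank $3-2=1$, and since the screen-transversal part then has rank $0$, it reduces to the lightlike transversal bundle $\textrm{Span}\{N\}$, where $N$ is the null field characterized by $\bar{g}(\xi,N)=1$, $\bar{g}(N,N)=0$, $\bar{g}(W,N)=0$.

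Next I would write the second fundamental form as $B(X,Y)=\beta(X,Y)\,N$ for a symmetric scalar-valued form $\beta$, and recover $\beta(X,Y)=\bar{g}(\bar{\nabla}_{X}Y,\xi)$ from the Gauss-type formula $\bar{\nabla}_{X}Y=\check{\nabla}_{X}Y+B(X,Y)$, using that $\check{\nabla}_{X}Y\in\Gamma(TM)$ is $\bar{g}$-orthogonal to $\xi\in\Gamma(\textrm{Rad}TM)$ and that $\bar{g}(N,\xi)=1$. The key degeneracy is then immediate: for every $X$ one has $\beta(X,\xi)=\bar{g}(\bar{\nabla}_{X}\xi,\xi)=\tfrac{1}{2}X\bar{g}(\xi,\xi)=0$, since $\xi$ is null and $\bar{\nabla}$ is metric. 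Thus, in the frame $\{\xi,W\}$, the symmetric form $\beta$ has entries $\beta(\xi,\xi)=\beta(\xi,W)=0$ and $\beta(W,W)=:\lambda$, while the induced metric $g$ has exactly the same vanishing pattern $g(\xi,\xi)=g(\xi,W)=0$ and $g(W,W)=1$, because $\xi$ lies in the radical.

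Finally I would conclude by the following $1\times 1$ linear-algebra observation: both $\beta$ and $g$ are symmetric bilinear forms on $TM$ that vanish whenever one argument equals $\xi$, so each is completely determined by its single value on the one-dimensional screen, whence $\beta=\lambda\,g$ identically. This gives $B(X,Y)=\lambda\,g(X,Y)\,N=g(X,Y)\,\mathcal{H}$ with $\mathcal{H}:=\lambda N\in\Gamma(\textrm{tr}(TM))$, so $M^{2}$ is totally umbilical; it is totally geodesic precisely when $\lambda\equiv 0$, i.e. $B\equiv 0$. The main point requiring care is the rank computation for the radical together with the reduction of $\textrm{tr}(TM)$ to the purely lightlike transversal, since this is what makes $B$ genuinely scalar-valued and yields the identity $\beta(\,\cdot\,,\xi)=0$; once this normalized frame is fixed the umbilicity is forced. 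I would also recall, as noted after the definition of totally umbilical lightlike immersions, that total umbilicity and total geodesy are independent of the choice of screen and transversal bundle, so the conclusion obtained in the local frame is intrinsic.
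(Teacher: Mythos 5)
Your proof is correct. Note that the paper itself offers no proof of this proposition---it is quoted verbatim from Bejancu--Duggal \cite{BD}---and your argument (rank-one radical forced by the Lorentzian index, $\beta(\cdot,\xi)=0$ by metric compatibility of $\bar{\nabla}$, hence both $g$ and $B$ supported on the one-dimensional screen and therefore proportional) is precisely the standard argument from that reference, so there is nothing to contrast.
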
 

This is the reason why a classification of totally umbilical lightlike submanifolds is much more complicated than that of totally umbilical non-degenerate submanifolds. 

We will observe co-dimension two and co-index one totally umbilical submanifolds in Theorem~\ref{tot_umb_Euc}, \ref{main1} and \ref{main2}. When we define a hyperplane in 
$\mathbb{E}^{m+2}_{s+1}$ by 
\begin{equation*} 
N^{m+1}(0) := \left\{\left(t+\frac{1}{4},x,t-\frac{1}{4}\right) \in \mathbb{E}^{m+2}_{s+1} \mid t \in \mathbb{R}, x \in \mathbb{E}^{m}_{s} \right\}. 
\end{equation*}
This is a totally geodesic $1$-lightlike hypersurface in $\mathbb{E}^{m+2}_{s+1}$. 
We can regard a flat marginally trapped submanifold $\mathbb{U}^{m}_{s}$ as a hypersurface in $N^{m+1}(0)$ by 
\begin{equation} 
\mathbb{E}^{m}_{s} \ni x \mapsto \left(\langle x, x \rangle_{s} + \frac{1}{4}, x, \langle x, x \rangle_{s} - \frac{1}{4}\right) \in N^{m+1}(0) \subset \mathbb{E}^{m+2
}_{s+1}. \label{analogy}
\end{equation}
In addition, we can also regard as a hypersurface in the lightcone $\Lambda^{m+1}_{s}$ by
\begin{equation*} 
\mathbb{E}^{m}_{s} \ni x \mapsto \left(\langle x, x \rangle_{s} + \frac{1}{4}, x, \langle x, x \rangle_{s} - \frac{1}{4}\right) \in \Lambda^{m+1}_{s} \subset \mathbb{E}^{m+2
}_{s+1}. 
\end{equation*}

Here set $\varepsilon = \pm 1$. A hypersurface in $\mathbb{M}^{m+2}_{s+1}(\varepsilon)$ defined by 
\begin{equation*} 
N^{m+1}(\varepsilon) := \{(t,x,t) \in \mathbb{M}^{m+2}_{s+1}(\varepsilon) \ | \ t \in \mathbb{R}, x \in \mathbb{M}^{m}_{s}(\varepsilon) \} 
\end{equation*}
is a totally geodesic $1$-lightlike hypersurface in $\mathbb{M}^{m+2}_{s+1}(\varepsilon)$. 
For the co-dimension two and co-index one totally umbilical isometric embedding in given (\ref{missing}), say $\psi$, in Theorem \ref{main1} and \ref{main2}, we can regard $\psi$ as a hypersurface in $N^{m+1}(1)$, i.e. 
\begin{equation*} 
\psi : \mathbb{M}^{m}_{s}(\varepsilon) \ni x \mapsto (1,x,1) \in N^{m+1}(\varepsilon) \subset \mathbb{M}^{m+2}_{s+1}(\varepsilon). 
\end{equation*}
This is an analogue of the consideration of the mapping (\ref{analogy}). 

From Proposition~\ref{lightlike1} and \ref{lightlike2}, isometric embeddings of $\Lambda^{m+1}_{s}$ into $\mathbb{M}^{m+2}_{s+1}(\varepsilon)$
\begin{equation*} 
\chi : \Lambda^{m+1}_{s} \rightarrow \mathbb{M}^{m+2}_{s+1}(\varepsilon) \ ; \  
  \begin{cases}
   x \mapsto (x,1) \quad (\varepsilon = 1), \\
   x \mapsto (1,x) \quad (\varepsilon = -1)
  \end{cases}
\end{equation*}
are totally umbilical $1$-lightlike hypersurfaces. 
On the other hand, non-flat space forms $\mathbb{M}^{m}_{s}(\varepsilon)$ are isometrically embedded in the lightcone $\Lambda^{m+1}_{s}$ as follows 
\begin{equation*} 
\rho : \mathbb{M}^{m}_{s}(\varepsilon) \rightarrow \Lambda^{m+1}_{s} \ ; \ 
  \begin{cases}
   x \mapsto (1, x) \quad (\varepsilon = 1), \\
   x \mapsto (x, 1) \quad (\varepsilon = -1).
  \end{cases}
\end{equation*}
For $\psi$, we can see there exists a nested structure of space forms via the lightcone 
\begin{equation*} 
\psi = \chi \circ \rho : \mathbb{M}^{m}_{s}(\varepsilon) \overset{\rho}{\hookrightarrow} \Lambda^{m+1}_{s} \overset{\chi}{\hookrightarrow} \mathbb{M}^{m+2}_{s+1}(\varepsilon) \ ; \ x \mapsto (1,x,1). 
\end{equation*} 
In summary, we can find out the following relation among pseudo-Riemannian space forms and lightcones: 

\begin{figure}[H]
\begin{center}
\includegraphics[scale=0.85]{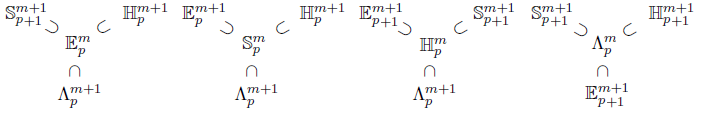}
  \caption{Totally umbilical inclusion relations.}
\end{center}
\end{figure}

\subsection{Application $1$ : The moduli space of isometric immersions}
Let $M^{m}_{s}, \bar{M}^{n}_{p}$ be pseudo-Riemannian manifolds, and $g, \bar{g}$ their pseudo-Riemannian metrics, respectively. 
We define a mapping space 
\begin{equation} 
\{ \phi \in C^{\infty}\left(M^{m}_{s}, \bar{M}^{n}_{p}\right) \mid \phi^{\ast}\bar{g} = g \}, \label{map_sp}
\end{equation}
where $C^{\infty}\left(M^{m}_{s}, \bar{M}^{n}_{p}\right)$ denotes the set of all smooth mapping from $M^{m}_{s}$ into $\bar{M}^{n}_{p}$. 
We introduce the compact open $C^{\infty}$-topology in the set and consider it as a topological space. 
The isometric group of $\bar{M}^{n}_{p}$ naturally acts on this mapping space (\ref{map_sp}). 
We call the quotient space by the action the \textit{moduli space} for isometric immersions $\phi : M^{m}_{s} \rightarrow \bar{M}^{n}_{p}$, denoted by $\mathcal{M}(M^{m}_{s},\bar{M}^{n}_{p})$. 
We shall denote the moduli space for totally umbilical isometric immersions by $\mathcal{M}_{umb}(M^{m}_{s},\bar{M}^{n}_{p})$. 
It is obvious that $\mathcal{M}_{umb}(M^{m}_{s},\bar{M}^{n}_{p}) \subset \mathcal{M}(M^{m}_{s},\bar{M}^{n}_{p})$ as a subspace. 

\begin{prop} \rm
If $\varepsilon = 0, \pm1$, then it holds 
\begin{equation*} 
\mathcal{M}_{umb}\left(\mathbb{M}^{m}_{s}(\varepsilon),\mathbb{M}^{n}_{p}(\varepsilon)\right) \overset{\textrm{homeo}}{\cong}
  \begin{cases}
   \{\textrm{$\ast$}\} \quad \left(n = m+1, p = s , s+1,\textrm{or} \ n = m+2, p=s\right), \\
   (X, \mathcal{O}_{X}) \quad \left(n \geq m+2, \ p \geq s + 1\right),
  \end{cases}
\end{equation*}
where $\{\textrm{$\ast$}\}$ is the one-point space, and a topological space $(X, \mathcal{O}_{X})$ is defined by 
\begin{equation*} 
X := \{g, u\}, \quad \mathcal{O}_{X} := \{ \emptyset, \{u\}, X \}. 
\end{equation*}
Here the elements $g$ and $u$ of $X$ express the congruent classes of totally geodesic isometric immersions and non-totally geodesic, totally umbilical isometric immersions, respectively. 
Moreover, the space $(X, \mathcal{O}_{X})$ is connected, non-Hausdorff. 
\end{prop}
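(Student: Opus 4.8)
The plan is to read off the underlying set of $\mathcal{M}_{umb}(\mathbb{M}^{m}_{s}(\varepsilon),\mathbb{M}^{n}_{p}(\varepsilon))$ from the classification, and then to determine the quotient topology by exhibiting a one-parameter family of non-totally-geodesic immersions that degenerates to a totally geodesic one.

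First I would identify the congruence classes. Any totally umbilical isometric immersion $\phi$ reduces, by Lemma~\ref{reduction} together with Lemma~\ref{umb_umb} in the non-flat cases, to a full totally umbilical immersion into a totally geodesic $\mathbb{M}^{m'}_{s'}(\varepsilon)\subset\mathbb{M}^{n}_{p}(\varepsilon)$, which is then listed in Theorem~\ref{main1}, \ref{main2} or \ref{tot_umb_Euc}. Among these lists, the only full models whose domain is again a space form of curvature $\varepsilon$, index $s$ and dimension $m$ are the totally geodesic inclusions (items (1),(2)) and the single marginally trapped model (item (5) of Theorems~\ref{main1},~\ref{main2}; item (4) of Theorem~\ref{tot_umb_Euc}); every other item has a domain of a different curvature or radius. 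Since the isometry group of $\mathbb{M}^{n}_{p}(\varepsilon)$ acts transitively on totally geodesic subspace forms of a fixed signature, the totally geodesic immersions form one congruence class $g$ and the marginally trapped ones form one class $u$ (recall that $\psi_{a}\cong\psi_{1}$ for $a\neq0$). Counting how much room is needed to realize each full model as a totally geodesic submanifold of $\mathbb{M}^{n}_{p}(\varepsilon)$ yields the stated dichotomy: $g$ is always present, whereas the marginally trapped model fits only once $\mathbb{M}^{n}_{p}(\varepsilon)$ is large enough, namely in the range $n\geq m+2$, $p\geq s+1$; in the three exceptional cases only $g$ survives and the moduli space is the one-point space.

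Next I would analyse the topology on $X=\{g,u\}$. Let $\mathcal{TU}$ denote the space of totally umbilical isometric immersions with the compact-open $C^{\infty}$-topology and $q:\mathcal{TU}\to X$ the quotient map, so that $q^{-1}(g)=O_{g}$ is the set of totally geodesic immersions, characterised by $H\equiv0$, and $q^{-1}(u)=O_{u}=\mathcal{TU}\setminus O_{g}$. Since $H$ depends continuously on the $2$-jet of $\phi$, the condition $H\equiv0$ is closed in this topology; hence $O_{g}$ is closed, $O_{u}$ is open, and $\{u\}$ is open in $X$. To show that $\{g\}$ is not open I would produce a family in $O_{u}$ converging to a point of $O_{g}$. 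For $\varepsilon=1$ take $\psi_{a}:\mathbb{S}^{m}_{s}(1)\to\mathbb{S}^{m+2}_{s+1}(1)$, $x\mapsto(a,x,a)$: each $\psi_{a}$ is isometric with induced metric independent of $a$, it is marginally trapped (class $u$) for $a\neq0$, while $\psi_{0}:x\mapsto(0,x,0)$ is the totally geodesic inclusion (class $g$), and $\psi_{a}\to\psi_{0}$ in the compact-open $C^{\infty}$-topology as $a\to0$. For $\varepsilon=0$ the analogous family is $x\mapsto(a\langle x,x\rangle_{s}+b,\,x,\,a\langle x,x\rangle_{s}-b)$, whose induced metric is independent of $(a,b)$ and whose mean curvature is lightlike and nonzero precisely for $a\neq0$; the case $\varepsilon=-1$ is parallel. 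Thus $g\in\overline{\{u\}}$, so $\{g\}$ is not open, the only open sets are $\emptyset$, $\{u\}$, $X$, and this is exactly $\mathcal{O}_{X}$. Finally $(X,\mathcal{O}_{X})$ is connected, since $X$ cannot be split into two nonempty open sets, and non-Hausdorff, since the only open set containing $g$ is $X$, which also contains $u$.

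I expect the topological step, not the classification, to be the main obstacle. The two delicate points are that $O_{g}$ is genuinely closed --- that a compact-open $C^{\infty}$-limit of totally geodesic isometric immersions is still an immersion and still totally geodesic, which rests on the continuity of the second fundamental form in this topology --- and that the degenerating family stays inside $\mathcal{TU}$ while converging to the other orbit. Verifying that the induced metric of this family is constant in the deformation parameter, so the maps remain isometric, and that the mean curvature collapses to zero exactly in the limit, is the computational core of the argument and the reason the two orbits cannot be separated.
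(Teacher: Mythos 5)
Your proposal is correct and takes essentially the same route as the paper: the underlying set is read off from Theorems~\ref{tot_umb_Euc}, \ref{main1} and \ref{main2}, and the topology is pinned down by the same degenerating family $\psi_{a}$ (with $\lim_{a\to 0}\psi_{a}=\psi_{0}$) combined with the observation that $H\equiv 0$ is a closed condition while $H\neq 0$ is open. Your extra care in verifying that the quotient topology is exactly $\{\emptyset,\{u\},X\}$ only makes explicit what the paper's proof leaves implicit.
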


\begin{proof}
From Theorem \ref{tot_umb_Euc}, \ref{main1} and \ref{main2}, the set $\mathcal{M}_{umb}\left(\mathbb{M}^{m}_{s}(\varepsilon),\mathbb{M}^{n}_{p}(\varepsilon)\right)$ is a one-point set when $n = m+1, p = s , s+1$ or $n = m+2, p=s$, and a two-point set when $n \geq m+2, \ p \geq s + 1$. 
In the case $n \geq m+2, \ p \geq s + 1$, 
we consider the following totally umbilical isometric immersions, for each $a \in \mathbb{R}$, 
\begin{align*} 
\psi_{a} &: \mathbb{M}^{m}_{s}(\varepsilon) \rightarrow \mathbb{M}^{m+2}_{s+1}(\varepsilon) \ ; \ x \mapsto (a,x,a) \quad (\varepsilon=\pm 1), \\
\psi_{a} &: \mathbb{M}^{m}_{s}(\varepsilon) \rightarrow \mathbb{M}^{m+2}_{s+1}(\varepsilon) \ ; \ x \mapsto (a\langle x,x \rangle_{s}, x , a\langle x, x \rangle_{s}) \quad (\varepsilon=0). 
\end{align*}
They are congruent to $\psi_{1}$ if $a \neq 0$, and a totally geodesic isometric immersion $\psi_{0}$ if $a = 0$. On the other hand, it is obvious that 
\begin{equation*} 
\lim_{a\rightarrow0} \psi_{a} = \psi_{0}. 
\end{equation*}
Therefore, $\mathcal{M}_{umb}\left(\mathbb{M}^{m}_{s}(\varepsilon),\mathbb{M}^{n}_{p}(\varepsilon)\right)$ is not a discrete space. Since mean curvature vector fields $H_{0}, H_{1}$ of $\psi_{0}, \psi_{1}$ entirely satisfy $H_{0}=0$ (closed condition) and $H_{1} \neq 0$ (open condition), respectively, we obtain the conclusion. 
\end{proof}

\begin{cor} \rm \label{mod_isom_imm}
Let $n \geq m+2, \ p \geq s + 1, \ \varepsilon = 0, \pm1$, then the moduli space of isometric immerisons between space forms which are of the same constant curvature
\begin{equation*} 
\mathcal{M}\left(\mathbb{M}^{m}_{s}(\varepsilon),\mathbb{M}^{n}_{p}(\varepsilon)\right) 
\end{equation*}
is a non-Hausdorff space.
\end{cor}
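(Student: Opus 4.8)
The plan is to deduce the non-Hausdorffness of the full moduli space $\mathcal{M}(\mathbb{M}^{m}_{s}(\varepsilon),\mathbb{M}^{n}_{p}(\varepsilon))$ from that of its subspace $\mathcal{M}_{umb}(\mathbb{M}^{m}_{s}(\varepsilon),\mathbb{M}^{n}_{p}(\varepsilon))$, exploiting the elementary but crucial fact that the Hausdorff separation axiom is \emph{hereditary}: every subspace of a Hausdorff space is again Hausdorff. Taking the contrapositive, it suffices to observe that the subspace $\mathcal{M}_{umb}$ fails to be Hausdorff.

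First I would recall from the preceding Proposition that, in the range $n \geq m+2$, $p \geq s+1$, the moduli space of totally umbilical isometric immersions is homeomorphic to the two-point space $(X, \mathcal{O}_{X})$ with $X = \{g,u\}$ and $\mathcal{O}_{X} = \{\emptyset, \{u\}, X\}$. This space is non-Hausdorff because the only open set containing $g$ is $X$ itself, so $g$ and $u$ cannot be enclosed in disjoint open neighborhoods. Concretely, $g$ and $u$ are the congruence classes of the totally geodesic immersion $\psi_{0}$ and the non-totally geodesic totally umbilical immersion $\psi_{1}$, and the one-parameter family $\psi_{a}$ of the Proposition realizes $\lim_{a \to 0} \psi_{a} = \psi_{0}$ while each $\psi_{a}$ with $a \neq 0$ is congruent to $\psi_{1}$; this is precisely what obstructs separation.

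Next I would invoke the inclusion $\mathcal{M}_{umb}(\mathbb{M}^{m}_{s}(\varepsilon),\mathbb{M}^{n}_{p}(\varepsilon)) \subset \mathcal{M}(\mathbb{M}^{m}_{s}(\varepsilon),\mathbb{M}^{n}_{p}(\varepsilon))$ as a topological subspace, as noted in the excerpt. Arguing by contradiction, suppose $\mathcal{M}$ were Hausdorff. Then the distinct points $[\psi_{0}]$ and $[\psi_{1}]$ of $\mathcal{M}$ would admit disjoint open neighborhoods $U \ni [\psi_{0}]$ and $V \ni [\psi_{1}]$ in $\mathcal{M}$; intersecting with the subspace, $U \cap \mathcal{M}_{umb}$ and $V \cap \mathcal{M}_{umb}$ would be disjoint open neighborhoods of $[\psi_{0}]$ and $[\psi_{1}]$ in $\mathcal{M}_{umb}$, contradicting the non-Hausdorffness just recalled. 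Hence $\mathcal{M}(\mathbb{M}^{m}_{s}(\varepsilon),\mathbb{M}^{n}_{p}(\varepsilon))$ cannot be Hausdorff.

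I do not anticipate a genuine obstacle, since essentially all of the content is already carried by the Proposition. The one point deserving care is confirming that $\mathcal{M}_{umb}$ indeed carries the subspace topology inherited from $\mathcal{M}$, rather than some a priori different topology, so that the hereditary argument applies verbatim; this is exactly what the assertion ``$\mathcal{M}_{umb} \subset \mathcal{M}$ as a subspace'' guarantees, and once it is granted the corollary is immediate.
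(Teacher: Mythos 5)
Your proposal is correct and is essentially the paper's own (implicit) argument: the corollary is deduced from the preceding proposition precisely because $\mathcal{M}_{umb}\left(\mathbb{M}^{m}_{s}(\varepsilon),\mathbb{M}^{n}_{p}(\varepsilon)\right)$ sits inside $\mathcal{M}\left(\mathbb{M}^{m}_{s}(\varepsilon),\mathbb{M}^{n}_{p}(\varepsilon)\right)$ with the subspace topology, and Hausdorffness is hereditary, so a non-Hausdorff subspace forces the ambient moduli space to be non-Hausdorff. The paper leaves this deduction unstated; your explicit contradiction argument with the disjoint neighborhoods of $[\psi_{0}]$ and $[\psi_{1}]$ fills in exactly the intended step.
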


\subsection{Application $2$ : Parallel submanifolds}
An isometric immersion $\phi : M^{m}_{s} \rightarrow \mathbb{M}^{n}_{p}(\varepsilon)$ is called \textit{substantial} if it is not contained in any non-degenerate totally umbilical submanifold in $\mathbb{M}^{n}_{p}(\varepsilon)$. 
By definition, if $\phi$ is substantial, then $\phi$ is full. 

Here, we define two additional classes of submanifolds. 
Let $M$ and $\bar{M}$ be pseudo-Riemannian manifolds, and $\phi : M \rightarrow \bar{M}$ an isometric immersion. 
We call the immersion $\phi$ \textit{parallel} if the covariant derivative of the second fundamental form $h$ of $\phi$ vanishes identically, i.e. for any $X, Y, Z \in \Gamma(TM)$,
\begin{equation*} 
(\tilde{\nabla}_{X}h)(Y,Z) := \nabla^{\bot}_{X}h(Y,Z) - h(\nabla_{X}Y,Z) - h(Y,\nabla_{X}Z) = 0, 
\end{equation*}
where $\nabla, \nabla^{\bot}$ are the Levi-Civita and the normal connections, respectively. 
We call the immersion $\phi$ \textit{symmetric} if, for each $x \in M$, there exist isometries $\Phi_{x} \in \textrm{Isom}(M), \Psi_{x} \in \textrm{Isom}(\bar{M})$ such that 
\begin{equation*} 
\Phi_{x}(x)=x, \quad \Psi_{x} \circ \phi = \phi \circ \Phi_{x}, \quad (\Psi_{x})_{\ast}\phi_{\ast}X=-\phi_{\ast}X, \quad (\Psi_{x})_{\ast}(\xi)=\xi, 
\end{equation*}
where $X \in T_{x}M$ and $\xi \in T^{\bot}_{x}M$.
We may consider the local version of the above conditions. 
Namely, for each $x \in M$, $\Phi_{x}$ and $\Psi_{x}$ are local isometries, 
we call $\phi$ \textit{locally symmetric}. 
When an isometric immersion $\phi : M \rightarrow \bar{M}$ is parallel or (locally) symmetric, 
we call the image $\phi(M)$ a parallel or (locally) symmetric submanifold in $\bar{M}$, respectively. 

When an ambient space $\bar{M}$ is a pseudo-Riemannian space form, a submanifold $M$ of $\bar{M}$ is parallel if and only if it is locally symmetric. Moreover, $M$ is complete parallel if and only if it is symmetric \cite{B}. 

\begin{lem} \rm
Let $M'$ and $\bar{M}$ be pseudo-Riemannian space forms, and let $\phi : M \rightarrow M'$ and $\psi : M' \rightarrow \bar{M}$ isometric immersions. 
We assume that $\psi$ is totally umbilical, that is, $M'$ is embedded as a totally umbilical submanifold in $\bar{M}$. 
Then, $\phi$ is parallel if and only if $\psi \circ \phi$ is parallel. 
\end{lem}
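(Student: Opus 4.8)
The plan is to compute the second fundamental form of the composition $F := \psi \circ \phi$ explicitly, differentiate it, and show that the total umbilicity of $\psi$ forces every extra term to vanish. First I would record the standard composition formula: writing $h_\phi, h_\psi, h_F$ for the second fundamental forms of $\phi, \psi, F$, one has $h_F(X,Y) = \psi_* h_\phi(X,Y) + h_\psi(\phi_* X, \phi_* Y)$ for $X, Y \in \Gamma(TM)$. Since $\psi$ is totally umbilical and $\phi$ is isometric, the second summand equals $g(X,Y) H_\psi$, where $H_\psi$ is the mean curvature vector of $\psi$, so that $h_F(X,Y) = \psi_* h_\phi(X,Y) + g(X,Y) H_\psi$. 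I would also set up the orthogonal splitting of the normal bundle of $F$ as $\psi_* N_\phi \oplus N_\psi$, where $N_\phi, N_\psi$ are the normal bundles of $\phi$ and $\psi$; this is what lets me read off the $F$-normal components cleanly at the end.

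Next I would differentiate, handling the two summands separately and using that $\psi$, being totally umbilical into the space form $\bar M$, has parallel mean curvature by Codazzi (as already noted in the excerpt), i.e. $D^\psi H_\psi = 0$, and satisfies the umbilicity relation $A^\psi_\eta = \langle H_\psi, \eta\rangle \, \mathrm{Id}$. Combining these with the Weingarten formula for $\psi$ gives $\bar\nabla_{F_* X} H_\psi = -\langle H_\psi, H_\psi\rangle F_* X$, which is tangent to $F$, so its $F$-normal part $D^F_X H_\psi$ vanishes and the term from differentiating $g(Y,Z) H_\psi$ drops out (after the $Xg(Y,Z)$ term cancels against the two contraction terms). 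For the summand $\psi_* h_\phi(Y,Z)$ I would apply the Gauss formula for $\psi$ to $\bar\nabla_{F_* X}(\psi_* h_\phi(Y,Z))$; the crucial point is that the cross term $h_\psi(\phi_* X, h_\phi(Y,Z)) = \langle \phi_* X, h_\phi(Y,Z)\rangle_{M'} H_\psi$ is zero, because $\phi_* X \in \phi_* TM$ and $h_\phi(Y,Z) \in N_\phi$ are orthogonal in $TM'$. Hence $\bar\nabla_{F_* X}(\psi_* h_\phi(Y,Z)) = \psi_* \nabla'_{\phi_* X} h_\phi(Y,Z)$, and a further application of the Weingarten formula for $\phi$ shows its $F$-normal component is exactly $\psi_* D^\phi_X h_\phi(Y,Z)$.

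Assembling the pieces, I expect the clean identity $(\tilde\nabla^F_X h_F)(Y,Z) = \psi_* \big( (\tilde\nabla^\phi_X h_\phi)(Y,Z) \big)$ for all $X, Y, Z \in \Gamma(TM)$. Since $\psi$ is an immersion, $\psi_*$ is injective, so the left-hand side vanishes identically precisely when $\tilde\nabla^\phi h_\phi$ does; that is, $\psi \circ \phi$ is parallel if and only if $\phi$ is parallel, which is the claim. The main obstacle is bookkeeping rather than depth: four Levi-Civita connections (on $M, M', \bar M$, plus the induced one) and three normal connections $D^\phi, D^\psi, D^F$ are in play, and the entire argument rests on correctly decomposing each ambient covariant derivative into its part tangent to $F$ and its parts in $\psi_* N_\phi$ and $N_\psi$. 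The two structural facts that make every unwanted contribution land in the tangent direction are exactly the umbilicity of $\psi$ (scalar shape operators) and the parallelism of $H_\psi$, so the delicate part is verifying, term by term, that these are precisely the properties being used.
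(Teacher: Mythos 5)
Your proposal is correct and is essentially the paper's own argument written out in full: the paper simply cites the Riemannian case (Lemma~3.7.5 of Berndt--Console--Olmos) and remarks that the same computation goes through in the pseudo-Riemannian setting because totally umbilical submanifolds of space forms have parallel mean curvature vector by Codazzi. The two structural facts you isolate --- the scalar shape operator $A^{\psi}_{\eta}=\langle H_{\psi},\eta\rangle\,\mathrm{Id}$ and $D^{\psi}H_{\psi}=0$ --- are exactly the ingredients the paper points to, so there is no substantive difference in approach.
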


\begin{proof}
In Riemannian case, refer \cite[Lemma~3.7.5]{BCO}. 
The proof is done by the same argument since totally umbilical submanifolds in pseudo-Riemannian space forms are of parallel mean curvature vector fields from Lemma \ref{umb_umb}. 
\end{proof}

\begin{prop} \rm \label{construction}
Let $\phi : M^{m}_{s} \rightarrow \mathbb{S}^{n}_{p}(1)$ be a substantial isometric immersion, and $\psi : \mathbb{S}^{n}_{p}(1) \rightarrow \mathbb{S}^{n+2}_{p+1}(1)$ a totally umbilical isometric immersion defined by
\begin{equation} 
\psi(x) = (1, x, 1) \in \mathbb{S}^{n+2}_{p+1}(1) \quad (x \in \mathbb{S}^{n}_{p}(1)). \label{codim_two}
\end{equation}
Then, the composition $\psi \circ \phi : M^{m}_{s} \rightarrow \mathbb{S}^{n+2}_{p+1}(1)$ is full parallel isometric immersion. 
Moreover, let $\iota$ be the totally geodesic inclusion 
\begin{equation*} 
\iota(x) = (0,x,0) \in \mathbb{S}^{n+2}_{p+1}(1) \quad (x \in \mathbb{S}^{n}_{p}(1)). 
\end{equation*}
Then, $\psi \circ \phi$ is not congruent to $\iota \circ \phi$ in $\mathbb{S}^{n+2}_{p+1}(1)$.
\end{prop}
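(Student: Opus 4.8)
The plan is to prove the three assertions in turn: parallelism (immediate from the preceding Lemma), fullness (the crux), and the non-congruence with $\iota\circ\phi$ (a corollary of fullness). Parallelism is fastest. Since $\psi$ is a totally umbilical isometric immersion between the space forms $\mathbb{S}^{n}_{p}(1)$ and $\mathbb{S}^{n+2}_{p+1}(1)$, the Lemma just above gives that $\psi\circ\phi$ is parallel if and only if $\phi$ is; hence, $\phi$ being parallel, $\psi\circ\phi$ is parallel. It is worth recording the composition of the second fundamental forms: writing $H_{\psi}$ for the null mean curvature field of $\psi$, one has $h_{\psi\circ\phi}(X,Y)=\psi_{\ast}h_{\phi}(X,Y)+g(X,Y)H_{\psi}$, whence $H_{\psi\circ\phi}=\psi_{\ast}H_{\phi}+H_{\psi}$, and $\bar{g}(H_{\psi\circ\phi},H_{\psi\circ\phi})$ equals the squared norm of $H_{\phi}$, since $H_{\psi}$ is null and $\psi_{\ast}H_{\phi}$ is tangent to $\psi(\mathbb{S}^{n}_{p}(1))$. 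In particular the mean curvature norms of $\psi\circ\phi$ and of $\iota\circ\phi$ coincide, so non-congruence cannot be read off from $H$ alone; this motivates the fullness route below.

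For fullness I argue by contradiction inside the ambient $\mathbb{E}^{n+3}_{p+1}\supset\mathbb{S}^{n+2}_{p+1}(1)$, splitting coordinates as (first, middle $\mathbb{E}^{n+1}_{p}$, last) so that $\psi(x)=(1,x,1)$. Any non-degenerate totally geodesic hypersurface is $\mathbb{S}^{n+2}_{p+1}(1)\cap a^{\bot}$ for a non-null $a=(\alpha,\bar{a},\beta)$. If $\psi\circ\phi(M)\subset a^{\bot}$, expanding $\langle(1,\phi(y),1),a\rangle=0$ gives $\langle\phi(y),\bar{a}\rangle_{p}=\alpha-\beta=:c$ for all $y$. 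If $\bar{a}=0$ this forces $c=0$ and then $\langle a,a\rangle=\beta^{2}-\alpha^{2}=0$, contradicting non-nullity; so $\bar{a}\neq 0$ and $\phi(M)$ lies in the affine slice $\Sigma:=\mathbb{S}^{n}_{p}(1)\cap\{\langle x,\bar{a}\rangle_{p}=c\}$. When $\Sigma$ is non-degenerate, Theorem~\ref{main1} identifies it as a non-degenerate totally umbilical submanifold of $\mathbb{S}^{n}_{p}(1)$, and the containment contradicts the substantiality of $\phi$; hence no such $a$ exists and $\psi\circ\phi$ is full.

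The non-congruence is then immediate. The image $\iota\circ\phi(M)=\{(0,\phi(y),0)\}$ lies in $\mathbb{S}^{n+2}_{p+1}(1)\cap e_{0}^{\bot}$, which is a non-degenerate totally geodesic hypersurface because $e_{0}$ is timelike; thus $\iota\circ\phi$ is \emph{not} full. Fullness is a congruence invariant, since every isometry of $\mathbb{S}^{n+2}_{p+1}(1)$ extends to a linear isometry of $\mathbb{E}^{n+3}_{p+1}$ and therefore carries non-degenerate totally geodesic hypersurfaces to such. A full immersion cannot be congruent to a non-full one, so $\psi\circ\phi$ is not congruent to $\iota\circ\phi$.

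The main obstacle is the degenerate-slice case left open in the fullness step: a non-null $a$ can produce a lightlike $\Sigma$, precisely when $\langle\bar{a},\bar{a}\rangle_{p}=c^{2}$, and this is not forbidden by substantiality alone (which excludes only non-degenerate totally umbilical submanifolds). To resolve it I would differentiate the identity $\langle\phi(y),\bar{a}\rangle_{p}=c$ to obtain $\bar{a}\perp\phi_{\ast}T_{y}M$ for every $y$, so that $\bar{a}-c\,\phi(y)$ is a null vector normal to $\phi$ at every point. Using that $\phi(M)$ is non-degenerate, as $M^{m}_{s}$ is pseudo-Riemannian, together with the fullness forced by substantiality, this rigid constant null normal direction yields a contradiction, ruling out the degenerate slices and completing the proof.
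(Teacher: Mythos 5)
The paper states this proposition without proof, so there is no argument of the author's to compare yours against; it can only be judged on its own merits. The parts you complete are sound: parallelism does follow from the preceding Lemma (granted that, as you tacitly do, one adds the hypothesis that $\phi$ itself is parallel --- the stated hypothesis "substantial" alone cannot yield the conclusion "parallel", so this is clearly the intended reading); a non-degenerate totally geodesic hypersurface of $\mathbb{S}^{n+2}_{p+1}(1)$ is indeed $a^{\bot}\cap\mathbb{S}^{n+2}_{p+1}(1)$ with $a$ non-null; containment of $\psi\circ\phi(M)$ in it is equivalent to $\langle\phi(y),\bar{a}\rangle_{p}\equiv c:=\alpha-\beta$; the non-degenerate slice case does contradict substantiality; and the non-congruence step is correct once fullness is known, since $\iota\circ\phi(M)$ lies in a non-degenerate totally geodesic hypersurface and fullness is a congruence invariant.

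The genuine gap is exactly the case you flag as open, and your sketched repair cannot be completed, because the "rigid constant null normal direction" is not contradictory: it is precisely the geometry of a translated lightcone. In the degenerate case one has $\langle\bar{a},\bar{a}\rangle_{p}=c^{2}$ with $c\neq0$ (non-nullity of $a$ forces $c\neq 0$), so after rescaling $b:=\bar{a}/c$ the condition says $\phi(M)\subset\Sigma_{b}:=\{x\in\mathbb{S}^{n}_{p}(1)\mid\langle x,b\rangle_{p}=1\}$, which is a translated lightcone and a \emph{lightlike} totally umbilical hypersurface (compare Proposition \ref{lightlike1} (5)); the paper's definition of "substantial" only forbids containment in \emph{non-degenerate} totally umbilical submanifolds, so it says nothing here, and no contradiction follows from non-degeneracy or fullness of $\phi$ either. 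Indeed such containment really occurs for parallel, substantial, full $\phi$: take $p=2$, $n\geq5$, $b$ a unit spacelike vector of $\mathbb{E}^{n+1}_{2}$, identify $b^{\bot}\cong\mathbb{E}^{n}_{2}$, and inside its lightcone consider $N':=\{(r\cos\theta,r\sin\theta,v)\mid v\in\mathbb{S}^{n-3}(r^{2})\subset\mathbb{E}^{n-2}\}$, whose induced metric is the Lorentzian product $-r^{2}d\theta^{2}+r^{2}g_{\mathbb{S}^{n-3}}$. This is a product of parallel submanifolds, hence parallel, and it has sectional curvatures $0$ and $r^{-2}$, hence is not totally umbilical; its affine span is all of $\mathbb{E}^{n}_{2}$, so the only proper affine subspace of $\mathbb{E}^{n+1}_{2}$ containing $b+N'$ is $b+\mathbb{E}^{n}_{2}$, whose intersection with $\mathbb{S}^{n}_{2}(1)$ is the degenerate $\Sigma_{b}$. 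Since (by the reduction argument behind Theorem \ref{main1}) every non-degenerate totally umbilical proper submanifold of the pseudo-sphere lies in a proper affine slice, no non-degenerate totally umbilical submanifold can contain $b+N'$, i.e.\ the inclusion $\phi:b+N'\hookrightarrow\mathbb{S}^{n}_{2}(1)$ is substantial (and parallel, and even full). Yet $\psi\circ\phi(M)\subset a^{\bot}\cap\mathbb{S}^{n+2}_{3}(1)$ for $a=(2,b,1)$ with $\langle a,a\rangle=-2\neq0$, so $\psi\circ\phi$ is \emph{not} full. So the degenerate case is not a technicality you may defer: under the paper's literal definition of substantiality the fullness claim actually fails there, and any correct proof must either strengthen "substantial" to exclude containment in lightlike totally umbilical hypersurfaces (translated lightcone slices), after which your case analysis closes in one line, or impose that hypothesis separately. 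As written, your proof cannot be completed.
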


\begin{rem} \rm
Proposition \ref{construction} is valid when the ambient space is a pseudo-hyperbolic space $\mathbb{H}^{n}_{p}(-1)$. 
In case of Riemannian parallel surfaces, this construction is known in \cite[(C) of Theorem~9.1 and  Theorem~10.1]{C1}. 
\end{rem}

Let $G$ and $K$ be a Lie group and its closed Lie subgroup, respectively, and $G/K$ an irreducible indefinite symmetric $R$-space such as indefinite Grassmann manifolds, indefinite orthogonal groups and complex spheres etc \cite{B, Na}. 
Let $f : G/K \rightarrow \mathbb{E}^{n+1}_{p}$ be a standard embedding. Then, by scaling of the metric, $f(G/K)$ is a minimal submanifold of $\mathbb{S}^{n}_{p}(1)$ or $\mathbb{H}^{n}_{p-1}(-1)$. 
When $\psi$ is the co-dimension two and co-index one totally umbilical isometric embedding (\ref{codim_two}), considering the composition $\psi \circ f$, we obtain a full complete parallel isometric embedding in $\mathbb{S}^{n+2}_{p+1}(1)$ or $\mathbb{H}^{n+2}_{p}(-1)$ by using Proposition~{\ref{construction}}. However, its mean curvature vector field $H$ of $\psi \circ f$ is non-zero and satisfies $\langle H,H\rangle_{p+1} = 0$. Namely, $\psi \circ f$ is a marginally trapped isometric immersion. 

On the other hand, a full parallel, minimal isometric immersion of an irreducible Riemannian symmetric $R$-space into a unit sphere is rigid, i.e. it is congruent to a standard embedding. However, in the indefinite case, there exist full parallel, marginally trapped isometric immersions of irreducible indefinite symmetric $R$-spaces into unit pseudo-spheres which are not congruent to standard embeddings. See also Blomstrom's rigidity theorem \cite[Theorem~3]{B}. 

B.~Y.~Chen et al. classified Riemannian and Lorentzian parallel surfaces in pseudo-Riemannian space forms. In \cite{C}, he commented that the explicit classifications of parallel submanifolds in pseudo-Riemannian space forms are much more complicated than that of Riemannian situations. In fact, it is known that there exist $24$ families and $53$ families of parallel Lorentzian surfaces in neutral space forms $\mathbb{S}^{4}_{2}(1)$ and $\mathbb{H}^{4}_{2}(-1)$, respectively. 
Some of these surfaces are full but not substantial. 
Regarding Riemannian parallel surfaces in $\mathbb{S}^{n}_{p}(1)$, we see in \cite{CV} the following flat complete parallel surfaces 
\begin{equation*} 
f : \mathbb{E}^{2} \ni (u,v) \mapsto \left(v^{2}+a^{2}-\frac{3}{4}, a\cos{u}, a\sin{u}, v, v^{2}+a^{2}-\frac{5}{4} \right) \ (a>0). 
\end{equation*}
This parallel surface is full but not substantial. In fact, we set 
\begin{align}
\phi &: \mathbb{E}^{3} \ni (x,y,z) \mapsto \left(x^{2}+y^{2}+z^{2}-\frac{3}{4}, x, y, z, x^{2}+y^{2}+z^{2}-\frac{5}{4}\right) \in \mathbb{S}^{4}_{1}(1), \label{hypersurf} \\
\psi &: \mathbb{E}^{2} \ni (u,v) \mapsto (a\cos{u}, a\sin{u}, v) \in \mathbb{E}^{3} \ (a>0). \nonumber
\end{align}
Then, by direct calculation, we see $\phi \circ \psi = f$. Since the hypersurface (\ref{hypersurf}) is totally umbilical, $f$ is full but not substantial. 
Via totally umbilical isometric immersions 
\begin{align*}
&\mathbb{E}^{n}_{p} \ni x \mapsto \left(\langle x, x \rangle_{s}-\dfrac{3}{4},x,\langle x, x \rangle_{s}-\dfrac{5}{4}\right) \in \mathbb{S}^{n+1}_{p+1}(1), \\
&\mathbb{S}^{n}_{p}(1) \ni x \mapsto (1,x,1) \in \mathbb{S}^{n+2}_{p+1}(1),
\end{align*}
substantial parallel submanifolds in $\mathbb{E}^{n}_{p}$ or $\mathbb{S}^{n}_{p}(1)$ induce full parallel ones in $\mathbb{S}^{n+1}_{p+1}(1)$ and $\mathbb{S}^{n+2}_{p+1}(1)$, respectively. A classification of full parallel submanifolds may be difficult, but a classification of substantial complete ones may be possible. As further references, see also \cite{Na, B, Ka, Ka1, Ka2}.

\section*{Acknowledgment}
The author would like to express his deepest gratitude to his advisor, Professor
Takashi Sakai for valuable comments and advices. 
The author is also very grateful to Luiz Carlos Barbosa da Silva for his useful comments.

\end{document}